\documentclass[]{scrartcl}
\usepackage{amsmath,amsthm,amssymb,thmtools,aligned-overset,amsfonts,amscd,mathrsfs,yhmath}
\usepackage{aurical}
\usepackage{authblk,orcidlink}
\usepackage{xcolor,graphicx}
\usepackage{xstring,bbding}
\usepackage{enumerate,enumitem}
\setenumerate[1]{label=(\arabic*)}	
\setenumerate[2]{label=(\alph*)}
\setenumerate[3]{label=(\roman*)}
\usepackage{mleftright}
\usepackage[left=3cm,bottom=3cm,top=3cm]{geometry}
\usepackage{hyperref}
\usepackage{cleveref}
\usepackage{biblatex}
\DeclareMathAlphabet\mathzapf{T1}{pzc} {mb} {it}
\newtheorem{Satz}{Theorem}[section]
\newtheorem{Lemma}[Satz]{Lemma}
\newtheorem{Korollar}[Satz]{Corollary}
\newtheorem{Proposition}[Satz]{Proposition}
\newtheorem{Definition}[Satz]{Definition}
\newtheorem{Beispiel}[Satz]{Example}
\newtheorem{Bemerkung}[Satz]{Remark}

\newcommand{\defi}[1]{\textbf{#1}}
\newcommand{\Stapel}[1]{
	\vcenter{%
		\Let@ \restore@math@cr \default@tag
		\baselineskip\fontdimen10 \scriptfont\tw@
		\advance\baselineskip\fontdimen12 \scriptfont\tw@
		\lineskip\thr@@\fontdimen8 \scriptfont\thr@@
		\lineskiplimit\lineskip
		\halign{\hfil$\m@th\displaystyle##$\hfil&$\m@th\displaystyle{}##$\hfil\crcr#1\crcr}%
	}%
}
\newcommand{\Menge}[3][]{					
	\mleft\{\,\begin{aligned}{#2}\end{aligned}\:\middle|\:\begin{aligned}{#3}\end{aligned}\,\mright\}
}

\DeclareMathOperator{\abschluss}{cl}			
\NewDocumentCommand{\cl}{om}{\ensuremath{\IfNoValueTF{#1}{\abschluss}{\abschluss}\IfNoValueF{#2}{\left(#2\right)}}}
\DeclareMathOperator*{\argmin}{argmin}			
\newcommand{\argminKegel}[1]{\sideset{}{_{#1}} \argmin} 
\DeclareMathOperator{\balancedcore}{balcore}
\NewDocumentCommand{\balancedCore}{om}{\IfNoValueTF{#1}{\balancedcore\left(#2\right)}{\balancedcore_{#1}\left(#2\right)}}
\DeclareMathOperator{\balancedhull}{bal}
\NewDocumentCommand{\balancedHull}{om}{\IfNoValueTF{#1}{\balancedhull\left(#2\right)}{\balancedhull_{#1}\left(#2\right)}}
\DeclareMathOperator{\unitBall}{B}
\NewDocumentCommand{\ball}{omm}{\unitBall_{#1}\left(#2;#3\right)} 
\NewDocumentCommand{\ballclosed}{omm}{\overline\unitBall_{#1}\left(#2;#3\right)} 

\NewDocumentCommand{\boundary}{om}{\IfNoValueTF{#1}{\bd\left(#2\right)}{\bd\left(#2\right)}}
\NewDocumentCommand{\convergentSequencesSpace}{o}{\IfNoValueTF{#1}{c}{c\left(#1\right)}}
\DeclareMathOperator{\core}{core}				
\NewDocumentCommand{\aint}{om}{\IfNoValueTF{#1}{\ensuremath{\core\left(#2\right)}}{\ensuremath{\core_{#1}\left(#2\right)}}}
\NewDocumentCommand{\HenigDilatingCones}{m}{\operatorname{DC}\left(#1\right)}
\NewDocumentCommand{\HenigDilatingConesVB}{m}{\operatorname{DC}^\vb\left(#1\right)}

\DeclareMathOperator{\dualkegelalg}{a-dc}		
\NewDocumentCommand{\dualconealg}{om}{\IfNoValueTF{#1}{\dualkegelalg\left(#2\right)}{\dualkegelalg_{#1}\left(#2\right)}}
\DeclareMathOperator{\dualkegeltop}{t-dc}		
\NewDocumentCommand{\dualconetop}{om}{\IfNoValueTF{#1}{\dualkegeltop\left(#2\right)}{\dualkegeltop_{#1}\left(#2\right)}}

\DeclareMathOperator{\effizienteMenge}{Eff}		
\NewDocumentCommand{\Eff}{mmm}{\effizienteMenge\left(#1,#2,#3\right)}
\DeclareMathOperator{\icore}{icore}				
\NewDocumentCommand{\raint}{om}{\IfNoValueTF{#1}{\ensuremath{\icore\left(#2\right)}}{\ensuremath{\icore_{#1}\left(#2\right)}}}
\DeclareMathOperator{\inneres}{int}				
\NewDocumentCommand{\interior}{om}{\ensuremath{\IfNoValueTF{#1}{\inneres}{\inneres}\left(#2\right)}}

\newcommand{\leereMenge}{\emptyset}				
\DeclareMathOperator{\lev}{lev}					
\newcommand{\lin}{\mathop{\mathrm{lin}}}		
\newcommand{\ohne}{\setminus}					
\DeclareMathOperator{\Peffhenig}{PEff}
\NewDocumentCommand{\PEffHenig}{mmm}{\Peffhenig\left(#1,#2,#3\right)}

\DeclareMathOperator{\Projektion}{Proj}			

\NewDocumentCommand{\ProjectionSet}{mm}{\ensuremath{\Projektion_{#1}\left(#2\right)}}
\DeclareMathOperator{\bd}{bd}					
\DeclareMathOperator{\relativeinterior}{relint}	
\NewDocumentCommand{\relint}{om}{\ensuremath{\IfNoValueTF{#1}{\relativeinterior}{\relativeinterior_{#1}}\left(#2\right)}}

\NewDocumentCommand{\SetRestrictionToPoint}{mm}{\left.#1\right|_{#2}}
\DeclareMathOperator{\weaklyEff}{WEff}			
\NewDocumentCommand{\WEff}{mmm}{\weaklyEff\left(#1,#2,#3\right)}
\NewDocumentCommand{\zeroSequencesSpace}{o}{\IfNoValueTF{#1}{c_0}{c_0\left(#1\right)}}

\DeclareRobustCommand{\norm}[2][]{			
	\begingroup
	\protect
	\if\relax\detokenize{#1}\relax			
	\ensuremath{\left\lVert#2\right\rVert}
	\else									
	\IfSubStr{bigBigbiggBigg}{#1}{
		\ensuremath{\#1\lVert#2\#1\rVert}
	}{
		\PackageWarning{Diese Größe ist mir nicht bekannt.}
	}
	\fi
	\endgroup
}
\newcommand{\skp}[3][]{						
	\ensuremath{\mleft\langle\,#2\:\middle|\:#3\,\mright\rangle}
}
\DeclareRobustCommand{\qed}{%
	\ifmmode \mathqed
	\else
	\leavevmode\unskip\penalty9999 \hbox{}\nobreak\protect\hfill\quad\hbox{\text{$_\blacksquare$}}%
	\fi
}

\newcommand\mathcalx{\text{\Fontskrivan \slshape x}}
\newcommand{\x}{\mathcalx}
\newcommand{\calS}{\mathcal S}
\newcommand{\s}{\calS}
\newcommand{\calC}{\mathcal C}
\renewcommand{\c}{C}
\newcommand{\calH}{\mathcal H}
\newcommand{\calD}{\mathcal D}
\newcommand{\calU}{\mathcal U}

\newcommand{\T}{\mathbb T}

\newcommand{\N}{\mathbb N}
\newcommand{\R}{\mathbb R}
\newcommand{\B}{\mathbb B}

\newcommand{\vb}{\otimes}

\newcommand{\fvb}{f^\vb}

\newcommand{\meinUrbildraum}{X}
\newcommand{\meinBildraum}{Y}

\begin{document}
    
    \title{Proper efficiency results in vector optimisation in real linear-topological spaces based on vectorial penalisation}

    \author[1]{{Christian} {Günther} \orcidlink{0000-0002-1491-4896}
    \thanks{c.guenther@ifam.uni-hannover.de}}
    \author[2]{{Elisabeth} {Köbis}
    \thanks{elisabeth.kobis@ntnu.no}}
    \author[2]{{Paul} {Schmölling} \orcidlink{0009-0008-4157-0903}
    \thanks{paul.schmolling@ntnu.no}}
    \author[3]{{Christiane} {Tammer}
    \thanks{christiane.tammer@mathematik.uni-halle.de}}
    
    \affil[1]{{Institut für Angewandte Mathematik}, {Leibniz Universität Hannover}, {{Welfengarten 1}, {Hannover}, {30167},  {Germany}}}
    \affil[2]{{Department of Mathematics}, {Norwegian University of Science and Technology}, {{Alfred Getz' vei 1}, {Trondheim}, {7034}, {Norway}}}
    \affil[3]{{Faculty of Natural Sciences II, Institute of Mathematics}, {Martin-Luther-University Halle-Wittenberg}, {{Theodor-Lieser-Straße 5}, {Halle (Saale)}, {06120}, {Germany}}}

    \maketitle

    \begin{abstract}
        In this paper, we are dealing with constrained vector optimisation problems where the objective function acts between real linear-topological spaces. Our aim is to study the relationships between the sets of properly efficient solutions to constrained and unconstrained vector optimisation problems under certain cone convexity assumptions on the objective function using a vectorial penalisation approach.
    \end{abstract}

    \par\vskip\baselineskip\noindent
\textbf{Keywords:}
        Vector optimisation, Proper efficiency, Vectorial penalisation, Cone convexity

    \par\vskip\baselineskip\noindent
\textbf{Mathematics Subject Classification 2020:} {90C29, 90C30}

    \newpage
	\section{Introduction}
	In this paper, we study constrained vector optimisation problems where the objective function acts between two linear-topological spaces. Optimisation in general spaces provides a suitable framework for approximation theory, optimal control and optimisation under uncertainty; see \textcite{GopfertRiahiTammerZalinescu2023VariationalMethodsPartiallyOrderedSpacesbook}, \textcite{Jahn2011VectorOptimizationTheoryApplicationsExtensionsbook}, \textcite{LeugeringSchiel2012RegularizednonlinearscalarizationvectoroptimizationproblemsPDEconstraintsjournalArticle} and \textcite{Phu2001RoughConvergenceNormedLinearSpacesjournalArticle,Phu2003RoughConvergenceInfiniteDimensionalNormedSpacesjournalArticle}. The aim of our paper is to derive a complete characterisation of the set of properly efficient solutions of the constrained vector optimisation problem by means of the sets of properly efficient solutions of two unconstrained vector optimisation problems. To this end, we use a vectorial penalisation approach under suitable cone-convexity assumptions on the objective function. In scalar optimisation, at least two penalisation methods are commonly used, namely \textit{Infinite penalisation}, where one deals with extended real-valued functions, and \textit{Clarke penalisation}, where the distance to the feasible set is used as the penalisation term for locally Lipschitz objective functions. For details of these methods, see \textcite{Ye2012ExactPenaltyPrinciplejournalArticle} and the references therein. Penalisation methods for scalar and vector optimisation problems are also discussed in \cite{ApetriiDureaStrugariu2014newpenalizationtoolscalarvectoroptimizationsjournalArticle}. Vectorial penalisation methods have been studied in particular in \cite{DureaStrugariuTammer2017MethodsDeriveNecessarySufficientOptimalityConditionsVectorOptimizationjournalArticle,GuntherTammerYao2018NecessaryoptimalityconditionsgeneralizedconvexmultiobjectiveoptimizationinvolvingnonconvexconstraintsjournalArticle} in order to derive useful necessary optimality conditions in vector optimisation with a comparatively simple structure, which in turn lead to effective algorithms.
    
    In our vectorial penalisation approach, the original constrained vector optimisation problem is replaced by two unconstrained problems to obtain a full characterisation of the constrained problem's solution set. In \cite{GuntherTammer2016RelationshipsconstrainedunconstrainedmultiobjectiveoptimizationapplicationlocationtheoryjournalArticle,GuntherTammer2018generalizedconvexconstrainedmultiobjectiveoptimizationjournalArticle,GuntherTammerYao2018NecessaryoptimalityconditionsgeneralizedconvexmultiobjectiveoptimizationinvolvingnonconvexconstraintsjournalArticle}, corresponding characterisations of the set of (weakly, strictly) efficient solutions of a constrained vector optimisation problem with a finite-dimensional image space of the objective function are derived by means of the vectorial penalisation technique. Moreover, \cite{GuntherKobisSchmollingTammer2023VectorialpenalisationvectoroptimisationreallineartopologicalspacesjournalArticle} studies a vectorial penalisation approach for (weakly, strictly) efficient elements of vector optimisation problems with a linear-topological image space equipped with a nontrivial, pointed, convex cone.
    
    In this paper, we focus on the set of properly efficient elements of the vector optimisation problem. The advantage of using properly efficient elements is that they form a subset of the set of (weakly) efficient elements and therefore allow the preferences of the decision maker to be taken into account more precisely. Furthermore, properly efficient elements can be characterised by strictly monotone scalarisation functionals, which is important both in theoretical arguments, for instance in duality statements, and in numerical methods. Different concepts of properly efficient elements and their relationships are studied in \cite[Section~2.4]{KhanTammerZalinescu2015SetvaluedOptimizationIntroductionApplicationsbook}; see also the references therein. In this paper, we consider the concept of Henig-proper efficiency, where the Henig dilating cone is involved in the definition; see \cite{Henig1982ProperefficiencyrespectconesjournalArticlea}.
    
    Our paper is organized as follows: In \Cref{s-preml} we introduce the constrained vector optimisation problem under consideration together with the standing assumptions. We then formulate the fundamentals of linear topological space theory as well as the algebraic and topological properties needed in our analysis, and we introduce the concept of cone convexity used in the main results. Several concepts of solutions to vector optimisation problems are studied in \Cref{s-proper eff}. The main results of this paper are contained in \Cref{sec:Penalisierungsmethoden:VektorielleBestrafung:RestrUnrestrPenUnrestr:PEffHenig}, where we characterise the set of properly efficient solutions of the constrained vector optimisation problem by the sets of properly efficient solutions of two unconstrained vector optimisation problems by the means of a vectorial penalisation approach under suitable cone-convexity assumptions on the objective function. In that section, we also derive important properties of the penalisation function. Finally, we give some conclusions in \Cref{s-con}.
    
	

	

	\section{Preliminaries}\label{s-preml}

	\subsection{Linear-topological spaces / Basic notations and notions}

    We briefly introduce some notations, mention basic definitions, and collect helpful statements for later results.

    In a linear space $X$ we denote the Minkowski sum of two sets $A,B\subseteq X$ by $A+B$ and if one of these sets is a singleton, $B=\{b\}$, we will simply write $A+b$.
    If $\T$ is a topological space and $A\subseteq \T$ we denote the interior of $A$ by $\interior{A}$, the closure of $A$ by $\cl{A}$ and the boundary of $A$ by $\boundary{A}$. For two given topological spaces $\T_1$, $\T_2$ we always consider the product space $\T_1\times\T_2$ to be equipped with the product topology.
    In a normed space $X$ we denote the closed ball around a point $x\in X$ with radius $\epsilon\geq0$ by $\B_X(x,\epsilon)$ and the zero of that space by $0_X$.
    In the specific case of the Euclidean space $\R^n$ we denote the standard ordering cone by $\R^n_\geq\coloneqq[0,\infty)^n$ and define $\R^n_\gneq\coloneqq\R^n_\geq\setminus\{0_{\R^n}\}$.

    \bigskip

 First, we will outline the essential mathematical foundations for deriving our main results.
 
To prove some equivalent descriptions of the set of properly efficient points in Section \ref{s-propeff}, we will use the following assertions.

\bigskip

    \begin{Lemma}[{\textcite[1.1.6~Theorem]{Engelking1989Generaltopologybook}}]\label{th:Topologie:IntIntersection=IntersectionInt}
		Let $\T$ be a topological space, $k \in \N$ and for all $i \in \{1, 2, \dotsc, k\}$ let $A_i \subseteq \T$.
		Then
		\[\interior{\bigcap_{i \mathop = 1}^k A_i}= \bigcap_{i \mathop = 1}^k \interior{A_i}.\]
	\end{Lemma}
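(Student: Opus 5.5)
We show the two inclusions separately, using only two standard facts: $\interior{A}$ is the largest open subset of $A$, and the interior operator is monotone, i.e.\ $A\subseteq B$ implies $\interior{A}\subseteq\interior{B}$. Monotonicity follows from maximality, since $\interior{A}$ is an open subset of $A\subseteq B$ and hence lies in $\interior{B}$.

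For ``$\subseteq$'', we note that $\bigcap_{i=1}^k A_i\subseteq A_j$ for every $j\in\{1,\dotsc,k\}$. Monotonicity then gives $\interior{\bigcap_{i=1}^k A_i}\subseteq\interior{A_j}$ for each $j$. Intersecting over $j$ yields the inclusion. This direction holds for arbitrary, even infinite, families.

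For ``$\supseteq$'', the set $U\coloneqq\bigcap_{i=1}^k \interior{A_i}$ is a finite intersection of open sets, so it is open. Here the finiteness of $k$ is essential, because topologies are closed only under finite intersections. Since $\interior{A_i}\subseteq A_i$ for each $i$, we have $U\subseteq\bigcap_{i=1}^k A_i$. Thus $U$ is an open subset of $\bigcap_{i=1}^k A_i$, and by maximality $U\subseteq\interior{\bigcap_{i=1}^k A_i}$.

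Combining the two inclusions gives the claim. There is no real obstacle. The only delicate point is the one noted above: the reverse inclusion relies on $k$ being finite, which is exactly where the statement fails for infinite families.
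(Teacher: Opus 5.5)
Your proof is correct and complete: monotonicity of the interior gives the inclusion $\subseteq$, and openness of a finite intersection of open sets together with maximality of the interior gives $\supseteq$, which is exactly where finiteness of $k$ is used. The paper gives no proof of its own and only cites Engelking's textbook (Theorem~1.1.6); your argument is the standard proof of that cited result.
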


	\begin{Proposition}\label{th:Topologie:IntCone0IsCone}\label{th:Topologie:IntIntCone0=IntCone}
		Let $\T$ be a linear-topological space and $C\subseteq\T$ a cone. Then \newline $\interior{\interior{C}\cup\{0_T\}}=\interior{C}$. 
        If $C$ is convex, then also $\interior{C}\cup\{0_T\}$ is a convex cone. As a consequence, if $C$ is convex, then $0_\T\in\interior{C}$ if and only if $C=\T$.
	\end{Proposition}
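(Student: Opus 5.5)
The plan is to prove the three assertions in order, using only that in a linear-topological space the maps $x\mapsto tx$ for $t>0$ are homeomorphisms. Throughout, a cone means a set $C$ with $tC\subseteq C$ for all $t>0$; whether $0_\T\in C$ is required does not matter below.

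\textbf{First claim.} Since $\interior{C}\subseteq \interior{C}\cup\{0_\T\}$ and $\interior{C}$ is open, we get $\interior{C}\subseteq\interior{\interior{C}\cup\{0_\T\}}$. For the converse, $\interior{C}\cup\{0_\T\}\subseteq C\cup\{0_\T\}$, so it suffices to show that $\interior{\interior{C}\cup\{0_\T\}}\subseteq\interior{C}$. Let $x$ be an interior point of $\interior{C}\cup\{0_\T\}$.
\begin{itemize}
\item If $x\neq 0_\T$, there is an open neighbourhood $U$ of $x$ inside $\interior{C}\cup\{0_\T\}$. Then $U\setminus\{0_\T\}$ is open, since points are closed in $\T$ (Hausdorff assumed; otherwise intersect with the open set $\T\setminus\cl{\{0_\T\}}$ after a small adjustment). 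This set contains $x$ and lies in $\interior{C}$, so $x\in\interior{C}$.
\item If $x=0_\T$, then $\interior{C}\cup\{0_\T\}$ is a neighbourhood $U$ of $0_\T$, and neighbourhoods of zero are absorbing. Thus every $y\in\T$ satisfies $ty\in U$ for some small $t>0$. If $y\neq 0_\T$, then $ty\in\interior{C}$, and since $\interior{C}$ is again a cone (scaling is a homeomorphism mapping $C$ into $C$), $y\in\interior{C}$. Hence $\T\setminus\{0_\T\}\subseteq\interior{C}$. Then $C\supseteq\T\setminus\{0_\T\}$, and if $0_\T\in C$, which is standard for cones, $C=\T$ and $0_\T\in\interior{C}$.
\end{itemize}
The case $x=0_\T$ is the main obstacle: the case $0_\T\notin C$ with $C=\T\setminus\{0_\T\}$ would contradict the claim. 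I would therefore rely on the convention $0_\T\in C$, or treat this degenerate case separately.

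\textbf{Second claim.} Let $D\coloneqq\interior{C}\cup\{0_\T\}$. $D$ is a cone because $t\interior{C}=\interior{tC}\subseteq\interior{C}$ for $t>0$. For convexity it suffices to show $D+D\subseteq D$, since a cone closed under addition is convex. For $x,y\in\interior{C}$, take a neighbourhood $U$ of $x$ with $U\subseteq C$. Then $U+y$ is an open neighbourhood of $x+y$, and $U+y\subseteq C+C\subseteq C$ by convexity of $C$ together with the cone property. Hence $x+y\in\interior{C}$. Sums involving $0_\T$ are trivial.

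\textbf{Consequence.} If $C=\T$, then $0_\T\in\interior{C}$ trivially. Conversely, if $0_\T\in\interior{C}$, then $\interior{C}$ is an absorbing neighbourhood of zero. For any $y$ choose $t>0$ with $ty\in\interior{C}\subseteq C$, so $y=t^{-1}(ty)\in C$, and therefore $C=\T$. Convexity enters only through the second claim, so the consequence could equally be derived directly from it as stated.
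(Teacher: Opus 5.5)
The paper states this proposition without proof, so there is no argument of the authors to compare with. Your proof is correct under the convention $0_\T\in C$. It is the natural elementary route: positive homogeneity of $\interior{C}$ combined with the fact that neighbourhoods of $0_\T$ are absorbing. Three small remarks follow.

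\textbf{The convention $0_\T\in C$.} It is necessary, not just convenient. For $C=\R\setminus\{0\}$ one has $\interior{\interior{C}\cup\{0\}}=\R\neq\interior{C}$, so you were right to flag it.

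\textbf{The non-Hausdorff case.} Your parenthetical fix does not work as stated: a point $x\in\cl{\{0_\T\}}\setminus\{0_\T\}$ does not lie in $\T\setminus\cl{\{0_\T\}}$. No separation axiom is needed if you split according to whether $0_\T\in U$.
\begin{itemize}
\item If $0_\T\notin U$, then $U\subseteq\interior{C}$ directly.
\item If $0_\T\in U$, then $U$ is a neighbourhood of $0_\T$ contained in $\interior{C}\cup\{0_\T\}$. The absorbency argument from your second bullet then gives $x\in\T\setminus\{0_\T\}\subseteq\interior{C}$.
\end{itemize}
This branch does not even use $0_\T\in C$; that convention is only needed for the point $x=0_\T$.

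\textbf{The final equivalence.} Your argument uses neither convexity nor the second claim: a cone containing a neighbourhood of $0_\T$ is already all of $\T$. So you have shown that $0_\T\in\interior{C}\iff C=\T$ holds for every cone. Your closing sentence, which says convexity enters through the second claim, misdescribes your own proof.
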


    \bigskip

       The following assertions play an important role in the proof of the relationships between the
 sets of properly efficient solutions to the non-penalised and penalised vector problems in Section \ref{sec:Fliege_type_results}.

 \bigskip

 \begin{Lemma}[{\textcite[Prop.~7.1.2]{Lopez2024PointSetTopologyWorkingTextbookbook}}] \label{th:Topologie:InneresVonKreuzprodukt}
		Assume that $\T_1,\T_2$ are real linear-topological spaces and $A_1\subseteq \T_1$, $A_2\subseteq \T_2$ are convex sets. Then
		\[\interior[\T_1\times\T_2]{A_1\times A_2}=\interior[\T_1]{A_1}\times\interior[\T_2]{A_2}.\]
	\end{Lemma}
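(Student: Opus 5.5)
We prove both inclusions directly from the definition of the product topology. Neither convexity nor the linear structure will be needed: the identity holds for arbitrary subsets of arbitrary topological spaces $\T_1,\T_2$, so the convexity hypothesis is simply not used.

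For the inclusion ``$\supseteq$'', we observe that $\interior[\T_1]{A_1}$ is open in $\T_1$ and $\interior[\T_2]{A_2}$ is open in $\T_2$. Hence $\interior[\T_1]{A_1}\times\interior[\T_2]{A_2}$ is a basic open set of the product topology on $\T_1\times\T_2$. Since $\interior{A_i}\subseteq A_i$, this set is contained in $A_1\times A_2$. The interior is the largest open subset, so the set lies in $\interior[\T_1\times\T_2]{A_1\times A_2}$.

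For the inclusion ``$\subseteq$'', we take $(x_1,x_2)\in\interior[\T_1\times\T_2]{A_1\times A_2}$. Then there is an open set $W$ with $(x_1,x_2)\in W\subseteq A_1\times A_2$. Because the open rectangles $U_1\times U_2$, with $U_i$ open in $\T_i$, form a base of the product topology, we can choose such a rectangle with $(x_1,x_2)\in U_1\times U_2\subseteq W$.

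We now show $U_1\subseteq A_1$. For any $u\in U_1$, the pair $(u,x_2)$ lies in $U_1\times U_2\subseteq A_1\times A_2$, so $u\in A_1$. Thus $x_1\in U_1\subseteq A_1$ with $U_1$ open, which gives $x_1\in\interior[\T_1]{A_1}$. The same argument with $(x_1,v)$ for $v\in U_2$ gives $x_2\in\interior[\T_2]{A_2}$.

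The only point that needs care is this slicing step. It relies on the other coordinate ($x_2$, respectively $x_1$) lying in the rectangle, which holds because the rectangle contains the point $(x_1,x_2)$. Without this, an empty factor could make a rectangle trivially contained in $A_1\times A_2$. Apart from this, the argument is routine, and combining the two inclusions proves the lemma.
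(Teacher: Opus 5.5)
Your proof is correct, and there is no proof in the paper to compare it against: the paper only cites Prop.~7.1.2 of a point-set topology textbook. Your argument is the standard rectangle argument that settles the statement. The one delicate point is the slicing step, which needs the rectangle to contain $(x_1,x_2)$ so that neither factor is empty, and you handle it correctly.

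You are also right that the convexity hypothesis, and the linear structure, play no role. The identity holds for arbitrary subsets of arbitrary topological spaces whenever there are finitely many factors. This more general form covers every place the paper uses the lemma, namely products of a cone (or a section of a cone) with $\R$, $\R_\geq$ or $\R_>$. There the only inputs needed are the openness of $\interior{H}\times\R$ and the formula for the interior of a product with a half-line. So your self-contained argument both supplies the missing proof and shows that the stated convexity assumption is superfluous.
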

    
	\begin{Proposition}[\cite{Jahn2011VectorOptimizationTheoryApplicationsExtensionsbook}, \cite{Zalinescu2002ConvexAnalysisGeneralVectorSpacesbook}]\label{th:Mengeneigenschaften:Kegel:konvexÄquivalenz}
		Let $\T$ be a linear-topological space and $C\subseteq\T$ a cone. Then $C$ is convex if and only if $C+C\subseteq C$. If $K\subseteq\T$ is another convex cone, then also $C\cap K$ is a convex cone.
	\end{Proposition}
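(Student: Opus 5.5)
We prove the two claims of \Cref{th:Mengeneigenschaften:Kegel:konvexÄquivalenz} separately and purely algebraically; the topology of $\T$ plays no role. Throughout we use that a cone $C$ satisfies $\lambda C\subseteq C$ for all $\lambda>0$ (with $0_\T\in C$ or not, depending on the convention; the argument below does not need $0_\T\in C$).

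For the equivalence, first assume $C$ is convex and let $x,y\in C$. Since $2x,2y\in C$ by the cone property, convexity gives $x+y=\tfrac12(2x)+\tfrac12(2y)\in C$, so $C+C\subseteq C$. Conversely, assume $C+C\subseteq C$ and let $x,y\in C$ and $\lambda\in[0,1]$. If $\lambda\in(0,1)$, then $\lambda x\in C$ and $(1-\lambda)y\in C$ by the cone property, hence $\lambda x+(1-\lambda)y\in C+C\subseteq C$. If $\lambda\in\{0,1\}$, the convex combination equals $y$ or $x$ and lies in $C$ trivially. This endpoint case is the only point needing care, since it avoids multiplying by $0$, which a cone without the origin would not permit.

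For the intersection, let $K$ be another convex cone. For $x\in C\cap K$ and $\lambda>0$ we have $\lambda x\in C$ and $\lambda x\in K$, so $C\cap K$ is a cone. The zero-membership convention is inherited as well, because if both cones contain $0_\T$ then so does the intersection. Convexity follows since an intersection of convex sets is convex. Alternatively, by the first part, $(C\cap K)+(C\cap K)\subseteq (C+C)\cap(K+K)\subseteq C\cap K$, and applying the equivalence again shows that $C\cap K$ is convex.

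None of these steps is a genuine obstacle. The only subtlety is to handle the endpoint values of $\lambda$, and the conventions on $0_\T$, consistently with the paper's definition of a cone.
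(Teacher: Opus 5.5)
Your proof is correct and is the standard textbook argument. The paper gives no proof of its own; it only cites Jahn and Zalinescu, who use the same scaling-by-$2$ and scaling-by-$\lambda,\,1-\lambda$ steps. Note that in the intersection part you use that $C$ itself is convex (both when intersecting convex sets and when invoking $C+C\subseteq C$), which is the intended reading of ``another convex cone''.
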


    \bigskip

    \begin{Proposition}\label{th:Topology:ProductSpace:x+thContinuous}
		Let $\T$ be a linear-topological space and $h\in\T$. Define $f:\T\times\R\to\T\times\R$ by $f(x,\lambda)=(x-\lambda h,\lambda)$. Then $f$ is continuous and invertible.
	\end{Proposition}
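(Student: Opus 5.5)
The map $f$ is continuous because it is assembled from maps that are continuous in a linear-topological space. Invertibility is shown by writing down the explicit inverse $g(x,\lambda)=(x+\lambda h,\lambda)$ and checking both compositions. Since $g$ has the same form as $f$, with $h$ replaced by $-h$, the continuity argument also applies to $g$, so $f$ is in fact a homeomorphism of $\T\times\R$. This is useful later when interiors of sets in the product space are transported along $f$.

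For continuity, recall that a map into the product $\T\times\R$ with the product topology is continuous if and only if both of its component maps are continuous. The second component $(x,\lambda)\mapsto\lambda$ is a coordinate projection, which is continuous by the definition of the product topology. The first component $(x,\lambda)\mapsto x-\lambda h$ is built in three stages:
\begin{itemize}
\item The map $(x,\lambda)\mapsto(x,-\lambda h)$ from $\T\times\R$ to $\T\times\T$ is continuous. Its first coordinate is the projection onto $x$. Its second coordinate is the composite of the projection $\lambda\mapsto\lambda$, the map $\lambda\mapsto(-\lambda,h)\in\R\times\T$, and scalar multiplication $\R\times\T\to\T$.
\item Scalar multiplication $\R\times\T\to\T$ is jointly continuous by the axioms of a linear-topological space.
\item Vector addition $\T\times\T\to\T$ is continuous by the same axioms, and composing it with the previous map gives $(x,\lambda)\mapsto x-\lambda h$.
\end{itemize}
Hence $f$ is continuous.

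For invertibility, compute both compositions directly. We have $f(g(x,\lambda))=(x+\lambda h-\lambda h,\lambda)=(x,\lambda)$. Likewise $g(f(x,\lambda))=(x-\lambda h+\lambda h,\lambda)=(x,\lambda)$. So $f$ is bijective with inverse $g$.

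I do not expect a real obstacle here. The only point needing care is to invoke the joint continuity of scalar multiplication from the axioms, rather than relying only on continuity in each variable separately.
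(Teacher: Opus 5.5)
Your proof is correct and complete. The paper states this proposition without proof, as a standard fact, and your argument is exactly the standard verification: continuity of each component via joint continuity of addition and scalar multiplication, together with the explicit inverse $g(x,\lambda)=(x+\lambda h,\lambda)$. Your extra observation that $g$ is continuous for the same reason is worth keeping. The paper later uses this proposition to transport $\interior{H}\times\R$ onto $\interior{H^\vb_h}$, in the proof that $H^\vb_h$ is a Henig dilating cone of $C\times\R_\geq$, and that step needs $f$ to be a homeomorphism, not merely a continuous bijection.
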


    \bigskip

		\begin{Definition}
		Let $X,Y$ be sets, let $S\subseteq X\times Y$. The \defi{projection of $S$ onto $X$} is defined by
		\[\ProjectionSet{X}{S}\coloneqq\Menge{x\in X}{(x,y)\in S}.\]
	\end{Definition}

	\begin{Definition}
		Let $X,Y$ be sets, $S\subseteq X\times Y$ and $y\in Y$. The \defi{restriction of $S$ to $(X,y)$} is defined to be the set
		\begin{align*}
			\SetRestrictionToPoint{S}{(X,y)}\coloneqq\ProjectionSet{X}{S\cap\left(X\cap\{y\}\right)}=\Menge{x\in X}{(x,y)\in S}.
		\end{align*}
	\end{Definition}

    \bigskip

    The next results will be used in the proof of Proposition \ref{th:Topologie:ProductTopology:RestrictionOfInteriorOfSet=InteriorOfRestriction}.

    \bigskip

	\begin{Proposition}\label{th:Topologie:ProduktTopologie:RestrictionOfOpenSetIsOpen}
		Let $\T_1,\T_2$ be topological spaces. For any in $\T_1\times \T_2$ open set $S\subseteq \T_1\times \T_2$ and any $y\in \T_2$ the set $\SetRestrictionToPoint{S}{(\T_1,y)}$ is open in $\T_1$.
	\end{Proposition}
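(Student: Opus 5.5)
The plan is to show that every point of $\SetRestrictionToPoint{S}{(\T_1,y)}$ is an interior point. The key tool is the continuity of the embedding $\iota_y\colon \T_1\to\T_1\times\T_2$, $\iota_y(x)\coloneqq(x,y)$. Directly from the definition of the restriction,
\[\SetRestrictionToPoint{S}{(\T_1,y)}=\Menge{x\in \T_1}{(x,y)\in S}=\iota_y^{-1}(S).\]
So the statement follows once $\iota_y$ is known to be continuous, because preimages of open sets under continuous maps are open.

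To prove continuity of $\iota_y$, I will use the universal property of the product topology. A map into $\T_1\times\T_2$ is continuous if and only if its compositions with both projections are continuous. Here $\mathrm{pr}_1\circ\iota_y=\mathrm{id}_{\T_1}$ and $\mathrm{pr}_2\circ\iota_y$ is the constant map with value $y$, and both are continuous.

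If I want to avoid citing the universal property, I can argue elementwise with a basis instead. Fix $x\in\SetRestrictionToPoint{S}{(\T_1,y)}$, so that $(x,y)\in S$. Since $S$ is open in the product topology, there are open sets $U\subseteq\T_1$ and $V\subseteq\T_2$ with $(x,y)\in U\times V\subseteq S$. Then for every $x'\in U$ we have $(x',y)\in U\times V\subseteq S$, because $y\in V$. Hence $x\in U\subseteq\SetRestrictionToPoint{S}{(\T_1,y)}$, and the restriction is a union of open sets, so it is open.

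There is no real obstacle here. The only point needing care is the definitional identification of the restriction with $\Menge{x\in\T_1}{(x,y)\in S}$, i.e.\ reading the intersection in its definition as $S\cap(\T_1\times\{y\})$, together with the fact that open rectangles form a basis of the product topology.
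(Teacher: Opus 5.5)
Your proof is correct. The elementwise argument in your third paragraph is exactly the paper's proof: you choose an open rectangle $U\times V\subseteq S$ around $(x,y)$ and note that $U\subseteq\SetRestrictionToPoint{S}{(\T_1,y)}$ because $y\in V$. Your primary route writes the restriction as the preimage of $S$ under the continuous slice map $x\mapsto(x,y)$; this is an equivalent repackaging, since that map's continuity is itself checked by the same rectangle argument or by the universal property. You also correctly read the paper's $X\cap\{y\}$ in the definition of the restriction as $\T_1\times\{y\}$.
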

	\begin{proof}
		For any $(x,y)\in S$ there exist open neighbourhoods $U\subseteq \T_1$ of $x$ and $V\subseteq \T_2$ of $y$ such that $U\times V\subseteq S$. Thus, it is $U\subseteq \SetRestrictionToPoint{S}{(X,y)}$ and therefore $\SetRestrictionToPoint{S}{(X,y)}$ open.
	\end{proof}
    
        \bigskip
    
	\begin{Lemma}[{\textcite[Chap.~2~1.1]{SchaeferWolff1999TopologicalVectorSpacesbook}}]\label{th:Topologie:ConvexSetHalfopenLinesInInterior}
		Let $\T$ be a linear-topological space and $K\subseteq\T$ be convex. For any $x\in\interior{K}$ and any $y\in\cl{K}$ there exists $z\in\interior{K}$ such that $x\in(z,y)\subseteq\interior{K}$. In particular, it is
		\[[x,y)\subseteq\interior{K}.\]
	\end{Lemma}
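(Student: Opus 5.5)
The plan is the classical argument in two stages. First we push $z$ slightly beyond $x$ on the ray from $y$ through $x$ so that it stays in $\interior{K}$. Then we show that every point of the open segment $(z,y)$ lies in $\interior{K}$ by writing a neighbourhood of it as a convex combination of a neighbourhood of $z$ and a point of $K$ close to $y$.

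\textbf{Step 1 (choice of $z$).} Since $x\in\interior{K}$, the set $U\coloneqq\interior{K}-x$ is an open neighbourhood of $0_\T$. Scalar multiplication $\R\to\T$, $t\mapsto t(x-y)$, is continuous, so there is $\delta>0$ with $\delta(x-y)\in U$. Set $z\coloneqq x+\delta(x-y)\in\interior{K}$. Then
\[x=\tfrac{1}{1+\delta}z+\tfrac{\delta}{1+\delta}y,\]
so $x\in(z,y)$. (If $x=y$, then $z=x$, and we read $(z,y)$ as the degenerate segment $\{x\}\subseteq\interior{K}$; alternatively one treats this case separately.)

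\textbf{Step 2 (the key inclusion).} Fix $\lambda\in(0,1]$ and $w\coloneqq\lambda z+(1-\lambda)y$. We must find an open neighbourhood of $w$ contained in $K$.

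1. Choose an open neighbourhood $V$ of $0_\T$ with $z+V\subseteq K$.
2. Since $y\in\cl{K}$, the neighbourhood $y-\frac{\lambda}{1-\lambda}V$ of $y$ meets $K$ when $\lambda<1$; the case $\lambda=1$ is trivial. This gives $y'\in K$ with $y-y'\in\frac{\lambda}{1-\lambda}V$.
3. Define $z'$ by $w=\lambda z'+(1-\lambda)y'$, that is,
\[z'=z+\tfrac{1-\lambda}{\lambda}(y-y')\in z+V.\]
4. The map $u\mapsto\frac1\lambda\bigl(u-(1-\lambda)y'\bigr)$ is a homeomorphism of $\T$. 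Hence the set $W\coloneqq\lambda(z+V)+(1-\lambda)y'$ is open and contains $w$.
5. By convexity of $K$, we have $W\subseteq\lambda K+(1-\lambda)K\subseteq K$.

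Thus $w\in\interior{K}$, so $(z,y)\subseteq\interior{K}$. The same argument with $x$ in place of $z$ gives $[x,y)\subseteq\interior{K}$ directly. Alternatively, $[x,y)\subseteq(z,y)$ follows from the parametrisation in Step 1.

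\textbf{Main obstacle.} The delicate point is Step 2: we cannot use $y$ itself, since it need not lie in $K$. The trick is to replace $y$ by a nearby $y'\in K$ and absorb the error into the neighbourhood of $z$. This requires scaling $V$ by $\lambda/(1-\lambda)$, which is legitimate because in a linear-topological space nonzero scalar multiples of neighbourhoods of zero are neighbourhoods of zero.
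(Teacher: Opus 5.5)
Your proof is correct: Step 1 produces the extension point $z=x+\delta(x-y)\in\interior{K}$ with $x\in(z,y)$. Step 2 then approximates $y$ by some $y'\in K$ and absorbs the error $y-y'$ into a neighbourhood of the interior point, which is the standard argument for the Schaefer--Wolff result that the paper cites without giving a proof of its own. The existence of $z$ goes beyond the cited source and is what the paper later uses in the restriction result for convex sets; your Step 1 handles it cleanly, including the degenerate case $x=y$ under the parametric reading of segments that the paper itself uses.
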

    
\bigskip

 The following proposition will be applied in the proof of the properties of dilating cones in Section \ref{sec:Fliege_type_results}.
 
 \bigskip
 
 \begin{Proposition}\label{th:Topologie:ProductTopology:RestrictionOfInteriorOfSet=InteriorOfRestriction}
		Let $\T_1,\T_2$ be topological spaces. For any set $S\subseteq \T_1\times \T_2$ and any $y\in \T_2$ it is
		\[\SetRestrictionToPoint{\interior[X\times Y]{S}}{(\T_1,y)}\subseteq\interior[\T_1]{\SetRestrictionToPoint{S}{(\T_1,y)}}.\]
		If $S$ is convex and there exists $x\in \T_1$ with $(x,y)\in\interior[\T_1\times \T_2]{S}$ then
		\[\SetRestrictionToPoint{\interior[\T_1\times \T_2]{S}}{(\T_1,y)}=\interior[\T_1]{\SetRestrictionToPoint{S}{(\T_1,y)}}.\]
	\end{Proposition}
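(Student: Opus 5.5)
The first inclusion only needs Proposition~\ref{th:Topologie:ProduktTopologie:RestrictionOfOpenSetIsOpen}. The second adds a segment argument in the convex case, using Lemma~\ref{th:Topologie:ConvexSetHalfopenLinesInInterior}.

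\emph{First inclusion.} The set $\interior[\T_1\times\T_2]{S}$ is open in $\T_1\times\T_2$. By Proposition~\ref{th:Topologie:ProduktTopologie:RestrictionOfOpenSetIsOpen}, its restriction $\SetRestrictionToPoint{\interior[\T_1\times\T_2]{S}}{(\T_1,y)}$ is therefore open in $\T_1$. Since $\interior{S}\subseteq S$, this restriction is contained in $\SetRestrictionToPoint{S}{(\T_1,y)}$. An open subset of a set lies in its interior, which gives the inclusion. No linear structure is used here.

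\emph{Reverse inclusion, convex case.} Here I will read $\T_1,\T_2$ as linear-topological spaces, which is needed for convexity to make sense. Fix $x_0$ with $(x_0,y)\in\interior{S}$, and let $x\in\interior[\T_1]{\SetRestrictionToPoint{S}{(\T_1,y)}}$. The goal is $(x,y)\in\interior{S}$.

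The idea is to push slightly beyond $x$, away from $x_0$. Put $x_t\coloneqq x+t(x-x_0)$ for $t>0$. The map $t\mapsto x_t$ is continuous with $x_t\to x$ as $t\to 0$. Since $x$ lies in the open set $\interior{\SetRestrictionToPoint{S}{(\T_1,y)}}$, there is some $t>0$ with $x_t\in\SetRestrictionToPoint{S}{(\T_1,y)}$, i.e.\ $(x_t,y)\in S\subseteq\cl{S}$.

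Now apply Lemma~\ref{th:Topologie:ConvexSetHalfopenLinesInInterior} to the convex set $S$ in $\T_1\times\T_2$, with interior point $(x_0,y)$ and closure point $(x_t,y)$. It yields
\[
[(x_0,y),(x_t,y))\subseteq\interior{S}.
\]
A direct computation gives
\[
x=\tfrac{1}{1+t}\,x_t+\tfrac{t}{1+t}\,x_0 .
\]
Hence $(x,y)$ is a convex combination of the two endpoints, and its weight on $(x_t,y)$ is $\frac{1}{1+t}<1$, so it lies in the half-open segment. Therefore $(x,y)\in\interior{S}$, which proves the reverse inclusion.

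The only delicate step is choosing the overshoot point $x_t$. It must satisfy two things at once: $(x_t,y)$ lies in $S$, so the lemma applies, and $x$ lies strictly between $x_0$ and $x_t$. Both follow from $x$ being an interior point of the restriction together with continuity of the vector operations.
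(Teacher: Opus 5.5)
Your proof is correct and follows the same route as the paper. The first inclusion uses openness of restrictions of open sets. The reverse inclusion applies Lemma~\ref{th:Topologie:ConvexSetHalfopenLinesInInterior} in $\T_1\times\T_2$ to the given interior point $(x_0,y)$ and a point of the slice lying beyond $x$ on the line through $x_0$ and $x$. The only difference is cosmetic: the paper obtains that point $z$ from the first part of the same lemma applied to the convex slice in $\T_1$, whereas you construct it explicitly as $x+t(x-x_0)$ using continuity of $t\mapsto x+t(x-x_0)$.
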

	\begin{proof}
		By \Cref{th:Topologie:ProduktTopologie:RestrictionOfOpenSetIsOpen} it is $\SetRestrictionToPoint{\interior[X\times Y]{S}}{(\T_1,y)}$ open in $\T_1$. Since $\SetRestrictionToPoint{\interior[X\times Y]{S}}{(\T_1,y)}\subseteq\SetRestrictionToPoint{S}{(\T_1,y)}$ we thus also get $\SetRestrictionToPoint{\interior[\T_1\times \T_2]{S}}{(\T_1,y)}\subseteq\interior[\T_1]{\SetRestrictionToPoint{S}{(\T_1,y)}}$.

Now, let $S$ be convex. For any $w\in\interior[X]{\SetRestrictionToPoint{S}{(\T_1,y)}}$ by \Cref{th:Topologie:ConvexSetHalfopenLinesInInterior} there exists $z\in\interior[\T_1]{\SetRestrictionToPoint{S}{(\T_1,y)}}$ such that $w\in(z,x)\subseteq \interior[\T_1]{\SetRestrictionToPoint{S}{(\T_1,y)}}$. Thus, it is $(z,y)\in S$ and again by \Cref{th:Topologie:ConvexSetHalfopenLinesInInterior} we get $(w,y)\in[(x,y),(z,y))\subseteq\interior[\T_1\times \T_2]{S}$. Therefore, we have $w\in\SetRestrictionToPoint{\interior[\T_1\times \T_2]{S}}{(\T_1,y)}$.
		%
		%
		%
		%
	\end{proof}

    With these notations, we would like to briefly provide a counterexample to the previous statement when $S$ is not convex.
    \bigskip 
	\begin{Beispiel}
		The last result is in general not true for non-convex sets even if they have nonempty interior. Consider the Saturn set (based on the planet)
		\[S\coloneqq \B_{\R^3}(0_{\R^3},1)\cup\left(\B_{\R^2}(0_{\R^2},2)\times\{0\}\right)\subseteq\R^2\times\R.\]
		Then it is
		\begin{align*}
			&\interior[\R^2]{\SetRestrictionToPoint{S}{(\R^2,0)}}=\interior[\R^2]{\B_{\R^2}(0_{\R^2},2)}\\
			&\neq\interior[\R^2]{\B_{\R^2}(0_{\R^2},1)}=\SetRestrictionToPoint{\interior[\R^3]{\B_{\R^3}(0_{\R^3},1)}}{(\R^2,y)}=\SetRestrictionToPoint{\interior[\R^3]{S}}{(\R^2,y)}.
		\end{align*}
	\end{Beispiel}

    \bigskip

    In order to compare elements of a real linear space, we introduce the following binary relations.

    \bigskip

    \begin{Definition}\label{def:<ARelationen}
        For a real linear space $Y$, $x,y\in Y$ and a set $\emptyset\neq A\subseteq Y$ we define the binary relations
    	\begin{align*}
        	x\leq_A y\quad&:\Leftrightarrow\quad y\in x+A, \label{gl:Kegelleq}\\
        	x\lneq_A y\quad&:\Leftrightarrow\quad y\in x+A\ohne\{0_Y\},\notag\\
        	x<_A y\quad&:\Leftrightarrow\quad y\in x+\interior{A},\notag
    	\end{align*}
        where for $\lneq_A$ we assume $A\neq\{0_Y\}$ and for $<_A$ we assume $\interior{A}\neq\emptyset$.
    \end{Definition}

\bigskip

We will study a penalisation approach in Section \ref{sec:Penalisierungsmethoden:VektorielleBestrafung:RestrUnrestrPenUnrestr:PEffHenig} where we suppose that certain properties concerning the level set of the penalisation function are satisfied.

\bigskip

    \begin{Definition}[Level sets]
        Let $X,Y$ be two sets and let $S\subseteq X$ and $\prec_A\subseteq Y\times Y$ be a binary relation (as in \Cref{def:<ARelationen}). Then we define the \defi{$\prec_A$-level set} of a function $f:X\to Y$ on a set $S\subseteq X$ to $y\in Y$ as
        \[\lev_{\prec_A}(S,f,y)\coloneqq\Menge{s\in S}{f(s)\prec_A y}.\]
    \end{Definition}

	\subsection{Generalised cone convexity} \label{sec:gen-cone-convexity}

    Let us recall two important concepts of generalised convexity for vector functions, which will play a key role in our paper.

	\begin{Definition} \label{def:Aconvexity-and-weakAquasiconvexity}
		Let $X,Y$ be linear spaces, $\calD\subseteq X$, $S\subseteq \calD$ be a convex set, and $A\subseteq Y$. A function $f:\calD\to Y$ is called \defi{$A$-convex} on $\s$ if
		\[\forall x^1,x^2\in\s,\ \forall\lambda\in(0,1):\ f\left(\lambda x^1+(1-\lambda)x^2\right)\leq_A \lambda f\left(x^1\right)+(1-\lambda)f\left(x^2\right).\]
		$f$ is called \defi{weakly $A$-quasiconvex} on $\s$ (in the sense of \textcite{Jahn2011VectorOptimizationTheoryApplicationsExtensionsbook}) if
		\[\forall x^1,x^2\in\s,\ x^1\neq x^2,\ f\left(x^1\right)\leq_A f\left(x^2\right);\ \exists x^0\in\s\setminus\{x^2\};\ \forall x\in[x^0,x^2):\ f(x)\leq_A f\left(x^2\right).\]
	\end{Definition}
    
    \begin{Lemma}\label{th:LineareRäume:Funktionseigenschaften:Konvexität:Vektorwertig:AKonvexA1A2}
		Let $X$ be a set, $Y$ be a linear space, $f:X\to Y$, $S\subseteq X$ be a convex set and $A_1\subseteq A_2\subseteq Y$. If $f$ is $A_1$-convex on $S$, then it is also $A_2$-convex on $S$.
	\end{Lemma}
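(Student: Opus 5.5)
The plan is to unfold the definition of $A_1$-convexity and then use the inclusion $A_1\subseteq A_2$ to pass from the relation $\leq_{A_1}$ to $\leq_{A_2}$ pointwise; no topological or convexity property of the sets $A_i$ is needed.

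Concretely, fix arbitrary $x^1,x^2\in\s$ and $\lambda\in(0,1)$. Since $\s$ is convex, the point $\lambda x^1+(1-\lambda)x^2$ lies in $\s$, so $f$ is defined there. By $A_1$-convexity of $f$ on $\s$ (\Cref{def:Aconvexity-and-weakAquasiconvexity}),
\[
f\left(\lambda x^1+(1-\lambda)x^2\right)\leq_{A_1}\lambda f\left(x^1\right)+(1-\lambda)f\left(x^2\right).
\]
By \Cref{def:<ARelationen} this means
\[
\lambda f\left(x^1\right)+(1-\lambda)f\left(x^2\right)\in f\left(\lambda x^1+(1-\lambda)x^2\right)+A_1 .
\]
Because $A_1\subseteq A_2$, we have $y+A_1\subseteq y+A_2$ for every $y\in Y$. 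Hence the same point lies in $f\left(\lambda x^1+(1-\lambda)x^2\right)+A_2$, which is exactly
\[
f\left(\lambda x^1+(1-\lambda)x^2\right)\leq_{A_2}\lambda f\left(x^1\right)+(1-\lambda)f\left(x^2\right).
\]
As $x^1,x^2,\lambda$ were arbitrary, $f$ is $A_2$-convex on $\s$.

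There is essentially no obstacle. The only point needing a remark is that \Cref{def:<ARelationen} requires $A\neq\emptyset$. The hypothesis that $f$ is $A_1$-convex already makes $\leq_{A_1}$ meaningful, so $A_1\neq\emptyset$, and then $A_2\supseteq A_1$ is nonempty as well. Thus $\leq_{A_2}$ is well defined, and the monotonicity $x\leq_{A_1}y\Rightarrow x\leq_{A_2}y$ gives the claim.
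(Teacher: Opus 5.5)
Your proof is correct and matches the paper's argument, which simply states that the assertion follows directly from the definition: you unfold $\leq_{A_1}$ and use $y+A_1\subseteq y+A_2$. Your remark on the nonemptiness of $A_2$ is a harmless technicality that the paper does not address.
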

	\begin{proof}
		The assertion follows directly from the definition.
	\end{proof}
    
    In particular, we are interested in the concepts from \Cref{def:Aconvexity-and-weakAquasiconvexity} with $A \in \{ \interior{C}, C \setminus \{0_Y\}, C\}$ (where $C\neq\{0_Y\}$ is a pointed, convex cone).
	The following simple relationship between these two (generalised) cone convexity concepts can be found in \textcite{Jahn2011VectorOptimizationTheoryApplicationsExtensionsbook}.

\bigskip

	\begin{Lemma}\label{th:CKonvImplik}
		Let $X,Y$ be real linear spaces, $\calD\subseteq X$, $f:\calD\to Y$, $S\subseteq X$ convex and $A\subseteq Y$ such that $\leq_A$ is transitive on $f[\s]$ (e.g., $A \in \{ \interior{C}, C \setminus \{0_Y\}, C\}$ for a cone $C\subseteq Y$). If $f$ is $A$-convex on $\s$, then it is also weakly $A$-quasiconvex on $\s$.
	\end{Lemma}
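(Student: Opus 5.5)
We take $x^1,x^2\in\s$ with $x^1\neq x^2$ and $f(x^1)\leq_A f(x^2)$, and we have to produce some $x^0\in\s\setminus\{x^2\}$ such that $f(x)\leq_A f(x^2)$ for every $x\in[x^0,x^2)$. The natural choice is $x^0\coloneqq x^1$. Since $\s$ is convex, the segment $[x^1,x^2]$ lies in $\s$, so $f$ is defined along it. We write each point of the half-open segment as $x=\lambda x^1+(1-\lambda)x^2$ with $\lambda\in(0,1]$.

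For $\lambda=1$ we have $x=x^1$, and the claim is the hypothesis $f(x^1)\leq_A f(x^2)$. For $\lambda\in(0,1)$, $A$-convexity gives
\[
f(x)\leq_A \lambda f(x^1)+(1-\lambda)f(x^2).
\]
It therefore suffices to show
\[
\lambda f(x^1)+(1-\lambda)f(x^2)\leq_A f(x^2)
\]
and then apply transitivity of $\leq_A$.

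Rewriting $f(x^2)=\lambda f(x^1)+(1-\lambda)f(x^2)+\lambda\bigl(f(x^2)-f(x^1)\bigr)$, the required relation becomes $\lambda\bigl(f(x^2)-f(x^1)\bigr)\in A$. By hypothesis $f(x^2)-f(x^1)\in A$. For $A\in\{\interior{C},C\setminus\{0_Y\},C\}$ with $C$ a cone, each such set is invariant under multiplication by $\lambda>0$. For $\interior{C}$ this holds because multiplication by $\lambda>0$ is a homeomorphism of $Y$ that maps $C$ onto $C$. So this step is immediate in those cases.

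The main obstacle is the level of generality. Transitivity of $\leq_A$ is only assumed on $f[\s]$, but the intermediate point $\lambda f(x^1)+(1-\lambda)f(x^2)$ need not lie in $f[\s]$. Moreover, positive-scaling invariance of $A$ is not stated as an assumption. I would handle this by proving the lemma under the cone-type hypotheses indicated in the parenthetical. There $\leq_A$ is transitive on all of $Y$, since $A+A\subseteq A$: for $C$ use \Cref{th:Mengeneigenschaften:Kegel:konvexÄquivalenz}, and for $\interior{C}$ and $C\setminus\{0_Y\}$ use convexity, pointedness and \Cref{th:Topologie:IntCone0IsCone}. Invariance under $\lambda>0$ also holds there. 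This implicitly assumes $C$ is convex (and pointed for $C\setminus\{0_Y\}$), as in the standing assumptions of the paper. For a general $A$ I would state explicitly that positive homogeneity of $A$ and transitivity of $\leq_A$ on the convex hull of $f[\s]$ are what is being used.
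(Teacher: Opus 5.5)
Your argument is correct and is the paper's own proof: take $x^0=x^1$ and chain $A$-convexity with transitivity of $\leq_A$ through the point $\lambda f(x^1)+(1-\lambda)f(x^2)$. The extra care you take is warranted, not pedantic. The paper's one-line proof silently uses three things: $\lambda\bigl(f(x^2)-f(x^1)\bigr)\in A$, transitivity at a point that need not lie in $f[\s]$, and $0\in A$ at $\lambda=1$. The lemma as literally stated actually fails: with the nonconvex cone $A=(\R_\geq\times\{0\})\cup(\{0\}\times\R_\geq)\subseteq\R^2$ and $f(x)=(x,x^2-x)$ on $\s=[0,1]$, $f$ is $A$-convex and $\leq_A$ is transitive on $f[\s]$, yet weak $A$-quasiconvexity fails for $x^1=0$, $x^2=1$, so your restriction to convex (and, for $C\setminus\{0_Y\}$, pointed) cones is genuinely needed.
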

	\begin{proof}
        Consider any $x^1,x^2\in X$, $x^1\neq x^2$ with $f(x^1)\leq_A f(x^2)$. We take $x^0\coloneqq x^1$ and consider any $x\in[x^0,x^2)$, so $x=\lambda x^0+(1-\lambda)x^2$ for $\lambda\in(0,1]$. By $A$-convexity and transitivity of $\leq_A$ we get $f(x)\leq_A \lambda f(x^0)+(1-\lambda)f(x^2)\leq_A f(x^2)$.
	\end{proof}

    It is easy to check that the reverse implication (of the implication given in \Cref{th:CKonvImplik}) is not true, in general.

	\section{Solution concepts in vector optimisation}\label{s-proper eff}

    Throughout the paper, the following main assumptions are made:
	\begin{align}
		\begin{cases}
			\meinUrbildraum,\meinBildraum\text{ real linear-topological spaces},\\
			\emptyset\neq\calD\subseteq\meinUrbildraum,\ f:\calD\to\meinBildraum,\\
			\{0_Y\}\neq\c\subseteq\meinBildraum\ \text{pointed, convex cone},\\
			\emptyset\neq\s\subseteq\calD.
		\end{cases}\tag{$A$}\label{gl:VOPRestrVor}
	\end{align}
    Under assumption \eqref{gl:VOPRestrVor}, we consider the vector optimisation problem
	\begin{equation}
		\argminKegel{\c}\limits_{x\in\s}\ f(x). \tag{$P_\s$} \label{gl:P}
	\end{equation}

	In the following definition, we introduce the classical concepts of solutions to the vector optimisation problem \eqref{gl:P} with respect to a general subset $\calC$ of $Y$. To formulate these concepts, we replace the nontrivial, pointed, convex cone $\c$ in the assumption \eqref{gl:VOPRestrVor} by a general subset $\calC$ of $Y$ and denote this assumption by ($A^{\prime}$). Furthermore, we consider $\calC$ instead of $\c$ in \eqref{gl:P} under the assumption ($A^{\prime}$).

    \bigskip

	\begin{Definition}[Efficiency]
		\label{def:Efficiency}
		Consider the problem \eqref{gl:P} under the assumption ($A^{\prime}$). A point $\x \in \s$ is said to be an \defi{efficient solution} of \eqref{gl:P} if $(f[\s] - f(\x)) \cap (-{\calC}) \subseteq \{0_\meinBildraum\}$. The set of all efficient solutions of \eqref{gl:P} is denoted by
		\[\Eff{f}{\s}{{\calC}}\coloneqq\Menge{\x\in\s}{\left(f[\s] - f(\x)\right) \cap (-{\calC}) \subseteq \{0_\meinBildraum\}}.\]
	\end{Definition}

	\begin{Definition}[Weak efficiency]
		\label{def:WeakEfficiency}
		Consider the problem \eqref{gl:P} under the assumption ($A^{\prime}$). A point $\x \in \s$ is said to be a \defi{weakly efficient solution} of \eqref{gl:P} if $(f[\s] - f(\x)) \cap (-\interior[\meinBildraum]{{\calC}}) = \emptyset$. The set of all weakly efficient solutions of \eqref{gl:P} is denoted by
		\[\WEff{f}{\s}{{\calC}}\coloneqq\Menge{\x\in\s}{\left(f[\s] - f(\x)\right) \cap (-\interior[\meinBildraum]{{\calC}}) = \emptyset}.\]
	\end{Definition}

    \bigskip

	In addition to these solution concepts, we will examine properly efficient elements and relationships between the different solution concepts in Section \ref{s-propeff}.
	\subsection{Proper efficiency}\label{s-propeff}

	Proper efficiency is an extension of the well-known efficiency concepts, that is particularly relevant when computing numerical solutions (see \cite{Geoffrion1968ProperefficiencytheoryvectormaximizationjournalArticle,Hartley1978ConeEfficiencyConeConvexityConeCompactnessjournalArticle}) or when using scalarisation methods to find solutions (see \cite{GuntherKhazayelTammer2022VectorOptimizationRelativelySolidConvexConesRealLinearSpacesjournalArticle}, \cite[Chapter~5]{KhanTammerZalinescu2015SetvaluedOptimizationIntroductionApplicationsbook} and references therein). It is well known that many solution methods based on scalarisation generate only weakly efficient solutions, which can be far away from the next efficient solution. An overview of different concepts of proper efficiency is given in \cite[Section~2.4]{KhanTammerZalinescu2015SetvaluedOptimizationIntroductionApplicationsbook}.
 However, for several proper efficiency concepts there exist density results that state that, under certain conditions, the set of properly efficient points is dense in the set of efficient points (see \textcite{ArrowBarankinBlackwell19535AdmissiblePointsConvexSetsbookSection}, \textcite{NewhallGoodrich2015DensityHenigEfficientPointsLocallyConvexTopologicalVectorSpacesjournalArticle}, \textcite{GopfertTammerZalinescu2004newABBtheoremnormedvectorspacesjournalArticle}).

	As shown in \cite{GuerraggioMolhoZaffaroni1994notionproperefficiencyvectoroptimizationjournalArticle,Henig1982ProperefficiencyrespectconesjournalArticlea} and \cite[Section~2.4]{KhanTammerZalinescu2015SetvaluedOptimizationIntroductionApplicationsbook}, under certain conditions many proper efficiency notions coincide with the concept introduced by \textcite{Henig1982ProperefficiencyrespectconesjournalArticlea}. He defined a point as being properly efficient if it is efficient with respect to a larger convex cone, the so-called dilating cone.

    \bigskip

	\begin{Definition}
		Assume \eqref{gl:VOPRestrVor}. A cone $H\subseteq\meinBildraum$ is said to be a \defi{Henig dilating cone} of $\c$ if $H$ is a convex cone satisfying $\c\setminus\{0_\meinBildraum\}\subset\interior{H}$. The \defi{set of Henig dilating cones} of $\c$ is defined as
		\[\HenigDilatingCones{\c}\coloneqq\Menge{H\subseteq\meinBildraum}{H\ \text{convex cone},\ \c\ohne\{0_\meinBildraum\}\subseteq\interior{H}}.\]
	\end{Definition}

	\begin{Definition}[Henig-proper efficiency \cite{Henig1982ProperefficiencyrespectconesjournalArticlea}] \label{def:HenigProperEfficiency}
		Consider \eqref{gl:P} under \eqref{gl:VOPRestrVor}. A point $\x \in \s$ is said to be a \defi{Henig-properly efficient solution} if $\x \in \Eff{f}{\s}{H}$ for some $H \in \HenigDilatingCones{\c}$.  The set of \defi{Henig-properly efficient} points of $f$ and $\s$ with respect to $\c$ is defined as
		\begin{align*}
			\PEffHenig{f}{\s}{\c}\coloneqq{}&\Menge{\x\in\s}{\exists H\in\HenigDilatingCones{\c}:\ \x\in\Eff{f}{\s}{H}} \\
			={}&\bigcup\limits_{H\in\HenigDilatingCones{\c}} \Eff{f}{\s}{H}.
		\end{align*}
	\end{Definition}

    \begin{Bemerkung}
        The notion of Henig-proper efficiency given in \Cref{def:HenigProperEfficiency} is a generalisation of other known proper efficiency concepts, for example in normed spaces or with the standard ordering cone.
        If $\c$ is not pointed (and not convex), then $\HenigDilatingCones{\c}=\{\meinBildraum\}$ and thus $\PEffHenig{f}{\s}{\c}=\emptyset$, except if $f[\s]=\{y_0\}$ in which case $\PEffHenig{f}{\s}{\c}=\s$. This case is more sensible dealt with by considering $\HenigDilatingCones{\c\setminus\lin(\c)}$ (as in \cite{Luc1989TheoryVectorOptimizationbook}), where $\lin(\c)\coloneqq \c\cap(-\c)$, or by not requiring $H$ to be convex, but instead taking $H=Y\setminus \tilde H$ with a convex $\tilde H$ and still requiring $\c\setminus\{0_Y\}\subseteq\interior{H}$.
    \end{Bemerkung}

    \bigskip
	
    The following example demonstrates simple cases where the above definition of proper efficiency is not desirable.
    \bigskip 
	\begin{Beispiel}
		Consider the cone $C=\R_\geq\times\R\subseteq\R^2$ (or any other halfspace or lexicographic cone). Here it is $\HenigDilatingCones{C}=\{\R^2\}$.\\
		Naturally, we have the inclusion $\{\R^2\}\subseteq\HenigDilatingCones{C}$. On the other side any $H\in\HenigDilatingCones{C}$ has to satisfy $C\ohne\{0\}\subseteq\interior{H}$ giving $(0,-1),(0,1)\in\interior{H}$. By convexity of $H$ it is also $0_{\R^2}\in\interior{H}$ and thus $H=\R^2$ by \Cref{th:Topologie:IntCone0IsCone}.

	\end{Beispiel}

    \bigskip

	The following relationships between different solution concepts follow immediately from their definitions.

    \bigskip

	\begin{Lemma}\label{th:BeziehungEff}\label{th:EffizienzÄquivalenzen}
		Suppose \eqref{gl:VOPRestrVor}. Then, the following assertions hold:
		\begin{enumerate}
			\item $\PEffHenig{f}{\s}{\c}\subseteq\Eff{f}{\s}{\c}\subseteq\WEff{f}{\s}{\c}$.
			\item \label{item:EffizienzÄquivalenzen:WeffC=EffIcoreC}
			$\WEff{f}{\s}{\c}=\Eff{f}{\s}{\interior[\meinBildraum]{\c}\cup\{0_\meinBildraum\}}$.
			\item If $\c=\interior[\meinBildraum]{\c}\cup\{0_\meinBildraum\}$ it is $\Eff{f}{\s}{\c}=\WEff{f}{\s}{\c}=\PEffHenig{f}{\s}{\c}$.
			\item \label{th:BeziehungEff:item:Eff(C2)inEff(C1)}
			If $\emptyset\neq\c_1\subseteq\c_2\subseteq\meinBildraum$ are arbitrary sets, then $\Eff{f}{\s}{\c_2}\subseteq\Eff{f}{\s}{\c_1}$.
		\end{enumerate}
	\end{Lemma}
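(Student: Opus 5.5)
I would prove the four items in the order (4), (2), (1), (3), since (4) is the basic monotonicity tool from which the others follow.

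\emph{Item (4).} If $\c_1\subseteq\c_2$, then $-\c_1\subseteq-\c_2$. Hence for $\x\in\Eff{f}{\s}{\c_2}$ we get
\[(f[\s]-f(\x))\cap(-\c_1)\subseteq(f[\s]-f(\x))\cap(-\c_2)\subseteq\{0_\meinBildraum\},\]
which is exactly $\x\in\Eff{f}{\s}{\c_1}$.

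\emph{Item (2).} This is a set identity. Let $D:=f[\s]-f(\x)$. Then
\[D\cap\bigl(-(\interior{\c}\cup\{0_\meinBildraum\})\bigr)=\bigl(D\cap(-\interior{\c})\bigr)\cup\bigl(D\cap\{0_\meinBildraum\}\bigr).\]
So $D\cap\bigl(-(\interior{\c}\cup\{0_\meinBildraum\})\bigr)\subseteq\{0_\meinBildraum\}$ holds if and only if $D\cap(-\interior{\c})\subseteq\{0_\meinBildraum\}$. Two cases remain:
\begin{enumerate}
\item If $0_\meinBildraum\notin\interior{\c}$, the last condition says $D\cap(-\interior{\c})=\emptyset$, i.e.\ weak efficiency.
\item If $0_\meinBildraum\in\interior{\c}$, then \Cref{th:Topologie:IntCone0IsCone} gives $\c=\meinBildraum$, contradicting pointedness together with $\c\neq\{0_\meinBildraum\}$.
\end{enumerate}
The second case is the only subtle point: it is needed so that $0_\meinBildraum\in D$ does not spoil the equivalence.

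\emph{Item (1).}
\begin{enumerate}
\item For $\PEffHenig{f}{\s}{\c}\subseteq\Eff{f}{\s}{\c}$: every $H\in\HenigDilatingCones{\c}$ satisfies $\c\ohne\{0_\meinBildraum\}\subseteq\interior{H}\subseteq H$. Also $0_\meinBildraum\in H$, since $H$ is a cone. Hence $\c\subseteq H$, and (4) gives $\Eff{f}{\s}{H}\subseteq\Eff{f}{\s}{\c}$. Taking the union over all $H$ gives the inclusion.
\item For $\Eff{f}{\s}{\c}\subseteq\WEff{f}{\s}{\c}$: $\interior{\c}\cup\{0_\meinBildraum\}\subseteq\c$, so (4) and (2) give the claim.
\end{enumerate}

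\emph{Item (3).} Under $\c=\interior{\c}\cup\{0_\meinBildraum\}$, item (2) yields $\WEff{f}{\s}{\c}=\Eff{f}{\s}{\c}$. For the remaining inclusion $\Eff{f}{\s}{\c}\subseteq\PEffHenig{f}{\s}{\c}$, it suffices to show $H:=\c\in\HenigDilatingCones{\c}$:
\begin{enumerate}
\item $\c$ is a convex cone by assumption.
\item $\c\ohne\{0_\meinBildraum\}=\interior{\c}\ohne\{0_\meinBildraum\}\subseteq\interior{\c}$.
\end{enumerate}
Then $\Eff{f}{\s}{\c}=\Eff{f}{\s}{H}\subseteq\PEffHenig{f}{\s}{\c}$, and (1) closes the chain of equalities. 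I expect no real obstacle beyond the $0_\meinBildraum\in\interior{\c}$ case in (2).
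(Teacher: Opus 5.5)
Your proof is correct and is essentially the direct verification from the definitions that the paper intends; the paper gives no written argument beyond saying the assertions follow immediately from the definitions. In item (2) you make explicit the one point the paper leaves implicit: pointedness, $C\neq\{0_Y\}$ and \Cref{th:Topologie:IntCone0IsCone} rule out $0_Y\in\interior{C}$, and this is what keeps $0_Y\in f[\mathcal S]-f(\mathcalx)$ from breaking the equivalence.
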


\bigskip

    Now we prove some equivalent descriptions of the set of Henig-proper efficient points involving weakly efficient points and special subsets of $\HenigDilatingCones{\c}$.

 \bigskip   
    
	\begin{Proposition}\label{th:PEff:PEffHenigEqui}
		Assume \eqref{gl:VOPRestrVor}. Then it is
		\begin{equation}
			\PEffHenig{f}{\s}{\c}=\bigcup\limits_{H\in\HenigDilatingCones{\c}} \Eff{f}{\s}{H}=\bigcup\limits_{H\in\HenigDilatingCones{\c}} \WEff{f}{\s}{H}. \label{gl:PEff:PEffHenigEqui:Eff=WEff}
		\end{equation}
		Furthermore, we have
		\begin{equation}
			\PEffHenig{f}{\s}{\c}=\bigcup\limits_{\substack{H\in\HenigDilatingCones{\c}\\ H\ \text{open}}} \Eff{f}{\s}{H\cup\{0_\meinBildraum\}}=\bigcup\limits_{\substack{H\in\HenigDilatingCones{\c}\\ H\ \text{open}}} \WEff{f}{\s}{H\cup\{0_\meinBildraum\}} \label{gl:PEff:PEffHenigEqui:OnlyOpenH}
		\end{equation}
		and for every $\calH\in\HenigDilatingCones{\c}$ it is also
		\begin{align}
			\PEffHenig{f}{\s}{\c}&=\bigcup\limits_{\substack{H\in\HenigDilatingCones{\c}\\ H\subseteq\calH}} \Eff{f}{\s}{H}=\bigcup\limits_{\substack{H\in\HenigDilatingCones{\c}\\ H\subseteq\calH}} \WEff{f}{\s}{H}\\
			&=\bigcup\limits_{H\in\HenigDilatingCones{\c}} \Eff{f}{\s}{H\cap\calH}=\bigcup\limits_{H\in\HenigDilatingCones{\c}} \WEff{f}{\s}{H\cap\calH}.
		\end{align}
	\end{Proposition}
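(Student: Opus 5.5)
The plan is to prove all the equalities as chains of inclusions that close up. Everything rests on \Cref{th:BeziehungEff}, \Cref{th:Topologie:IntCone0IsCone}, \Cref{th:Topologie:IntIntersection=IntersectionInt} and \Cref{th:Mengeneigenschaften:Kegel:konvexÄquivalenz}. The basic device is this: for a given $H\in\HenigDilatingCones{\c}$, pass to the smaller cone $H'\coloneqq\interior{H}\cup\{0_\meinBildraum\}$. By \Cref{th:Topologie:IntCone0IsCone}, $H'$ is a convex cone with $\interior{H'}=\interior{H}\supseteq\c\setminus\{0_\meinBildraum\}$, so $H'\in\HenigDilatingCones{\c}$. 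By \Cref{th:BeziehungEff}\ref{item:EffizienzÄquivalenzen:WeffC=EffIcoreC} we have $\WEff{f}{\s}{H}=\Eff{f}{\s}{H'}$.

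\textbf{Step 1: equation \eqref{gl:PEff:PEffHenigEqui:Eff=WEff}.} By \Cref{th:BeziehungEff}, $\Eff{f}{\s}{H}\subseteq\WEff{f}{\s}{H}$. The definition of weak efficiency only uses the convexity of $H$, so this inclusion applies to any $H\in\HenigDilatingCones{\c}$. It gives $\bigcup_H\Eff{f}{\s}{H}\subseteq\bigcup_H\WEff{f}{\s}{H}$. Conversely, $\WEff{f}{\s}{H}=\Eff{f}{\s}{H'}$ with $H'\in\HenigDilatingCones{\c}$, which yields the reverse inclusion. The first equality in \eqref{gl:PEff:PEffHenigEqui:Eff=WEff} is just \Cref{def:HenigProperEfficiency}.

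\textbf{Step 2: equation \eqref{gl:PEff:PEffHenigEqui:OnlyOpenH}.} For $H\in\HenigDilatingCones{\c}$, the set $\interior{H}$ is open and convex, closed under positive scaling, and contains $\c\setminus\{0_\meinBildraum\}$. Here I read ``$H$ open'' in the sense that the cone property is taken for positive multiples, so that $\interior{H}$ itself qualifies; this convention point needs a brief check. Then $\WEff{f}{\s}{H}=\Eff{f}{\s}{\interior{H}\cup\{0_\meinBildraum\}}$, so every element of $\PEffHenig{f}{\s}{\c}$ lies in the open-cone union. Conversely, for open $H$, the cone $H\cup\{0_\meinBildraum\}$ is a convex cone containing $H$ in its interior, so it lies in $\HenigDilatingCones{\c}$ and its Eff-set is contained in $\PEffHenig{f}{\s}{\c}$. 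The WEff variant follows because $\interior{H\cup\{0\}}=H$ (using \Cref{th:Topologie:IntCone0IsCone}). Hence $\WEff{f}{\s}{H\cup\{0\}}=\Eff{f}{\s}{H\cup\{0\}}$ by \Cref{th:BeziehungEff}\ref{item:EffizienzÄquivalenzen:WeffC=EffIcoreC}.

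\textbf{Step 3: the $\calH$-versions.} Fix $\calH\in\HenigDilatingCones{\c}$. For any $H\in\HenigDilatingCones{\c}$, the set $H\cap\calH$ is a convex cone by \Cref{th:Mengeneigenschaften:Kegel:konvexÄquivalenz}. By \Cref{th:Topologie:IntIntersection=IntersectionInt}, $\interior{H\cap\calH}=\interior{H}\cap\interior{\calH}\supseteq\c\setminus\{0_\meinBildraum\}$. So $H\cap\calH\in\HenigDilatingCones{\c}$ and $H\cap\calH\subseteq\calH$. By \Cref{th:BeziehungEff}\ref{th:BeziehungEff:item:Eff(C2)inEff(C1)}, $\Eff{f}{\s}{H}\subseteq\Eff{f}{\s}{H\cap\calH}$. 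This yields the chain
\[
\PEffHenig{f}{\s}{\c}\subseteq\bigcup_{H}\Eff{f}{\s}{H\cap\calH}\subseteq\bigcup_{H\subseteq\calH}\Eff{f}{\s}{H}\subseteq\PEffHenig{f}{\s}{\c},
\]
so all three sets are equal. For the WEff variants, Step 1 shows that each restricted WEff union is sandwiched between the corresponding Eff union and $\PEffHenig{f}{\s}{\c}$. In the family $\{H\subseteq\calH\}$, the replacement cone $H'=\interior{H}\cup\{0\}$ again satisfies $H'\subseteq H\subseteq\calH$. In the family $\{H\cap\calH\}$, $H\cap\calH$ is itself a dilating cone, so its WEff set lies in $\PEffHenig{f}{\s}{\c}$ by Step 1.

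The only genuinely non-formal step is the passage from weak efficiency back to efficiency via $H'$. It needs $H'$ to be a convex cone whose interior still contains $\c\setminus\{0\}$, and this is exactly what \Cref{th:Topologie:IntCone0IsCone} supplies. The remaining steps are bookkeeping with monotonicity of Eff in the cone.
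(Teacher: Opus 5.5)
Your route is the paper's route. You replace $H$ by $\interior{H}\cup\{0_\meinBildraum\}$ and apply \Cref{th:BeziehungEff}~(2), then intersect with $\calH$ using \Cref{th:Topologie:IntIntersection=IntersectionInt} and monotonicity of $\Eff{f}{\s}{\cdot}$. Your reading of ``$H$ open'' is also what the paper's proof effectively uses, since its open cones are the sets $\interior{H}$. If cones had to contain $0_\meinBildraum$, \Cref{th:Topologie:IntCone0IsCone} would make $\meinBildraum$ the only open member of $\HenigDilatingCones{\c}$.

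\textbf{The failing step.} It is $\Eff{f}{\s}{H}\subseteq\WEff{f}{\s}{H}$ for every $H\in\HenigDilatingCones{\c}$, justified by ``weak efficiency only uses convexity''.
\begin{itemize}
\item Because $0_\meinBildraum\in f[\s]-f(x)$ for each $x\in\s$, this inclusion needs $0_\meinBildraum\notin\interior{H}$, i.e.\ $H\neq\meinBildraum$. In \Cref{th:BeziehungEff} this comes from pointedness of $\c$, which dilating cones do not inherit.
\item Yet $\meinBildraum\in\HenigDilatingCones{\c}$ always. For $f$ constant on $\s$ one has $\Eff{f}{\s}{\meinBildraum}=\s$ but $\WEff{f}{\s}{\meinBildraum}=\emptyset$.
\item The same case breaks your application of \Cref{th:BeziehungEff}~(2) to $H\cup\{0_\meinBildraum\}=\meinBildraum$ in Step~2, and the sandwich argument in Step~3.
\end{itemize}
This cannot be patched in general. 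Take the pointed lexicographic cone $\c=\{y\in\R^2: y_1>0\}\cup(\{0\}\times\R_\geq)$. Every $H\in\HenigDilatingCones{\c}$ equals $\R^2$: from $(0,1)\in\interior{H}$ and $(\varepsilon,-1)\in\c$ one gets some $(-\delta,0)\in\interior{H}$, and together with $(1,0)\in\interior{H}$ this gives $0\in\interior{H}$. So for constant $f$ we get $\PEffHenig{f}{\s}{\c}=\s$, while every $\WEff$-union in the statement is empty. The paper's proof takes exactly the same step (``Since $\Eff{f}{\s}{H}\subseteq\WEff{f}{\s}{H}$ \dots''). This is therefore a defect of the statement, not a departure on your part.

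\textbf{What survives.}
\begin{itemize}
\item All $\Eff$-equalities hold unconditionally. For the first inclusion in \eqref{gl:PEff:PEffHenigEqui:OnlyOpenH}, use $\Eff{f}{\s}{H}\subseteq\Eff{f}{\s}{\interior{H}\cup\{0_\meinBildraum\}}$ directly by monotonicity instead of passing through $\WEff$.
\item Every $\WEff$-union is contained in $\PEffHenig{f}{\s}{\c}$, because $\WEff{f}{\s}{H}\subseteq\Eff{f}{\s}{\interior{H}\cup\{0_\meinBildraum\}}$ holds for all $H$.
\end{itemize}
For the reverse $\WEff$-inclusions you need an extra hypothesis such as $\HenigDilatingCones{\c}\neq\{\meinBildraum\}$, together with a separate treatment of $H=\meinBildraum$:
\begin{itemize}
\item If $f$ is not constant on $\s$, then $\Eff{f}{\s}{\meinBildraum}=\emptyset$, so this cone contributes nothing.
\item If $f$ is constant, pick any $H''\in\HenigDilatingCones{\c}\setminus\{\meinBildraum\}$. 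Then $H''$, $H''\cap\calH$, or the open cone $\interior{H''}$ (for the respective families) yields a $\WEff$-set equal to $\s$.
\end{itemize}
With that case distinction your argument is complete.
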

	\begin{proof}
		The first equality is just the definition. Since $\Eff{f}{\s}{H}\subseteq\WEff{f}{\s}{H}$ we also get the forward inclusion. For the other one we consider any $H\in\HenigDilatingCones{\c}$ and use $J\coloneqq\interior{H}\cup\{0_\meinBildraum\}$, which by \Cref{th:Topologie:IntCone0IsCone} is also a convex cone satisfying $\c\ohne\{0_\meinBildraum\}\subseteq\interior{H}=\interior{J}$, so that $J\in\HenigDilatingCones{\c}$. By \Cref{th:EffizienzÄquivalenzen}~\ref{item:EffizienzÄquivalenzen:WeffC=EffIcoreC} we also have $\WEff{f}{\s}{H}=\Eff{f}{\s}{J}$, which proves the wanted inclusion and thus \eqref{gl:PEff:PEffHenigEqui:OnlyOpenH}.\\
		With the same argument we can see
		\[\PEffHenig{f}{\s}{\c}\supseteq\bigcup\limits_{\substack{H\in\HenigDilatingCones{\c}\\ H\subseteq\calH}} \Eff{f}{\s}{H}=\bigcup\limits_{\substack{H\in\HenigDilatingCones{\c}\\ H\subseteq\calH}} \WEff{f}{\s}{H}.\]
		For the inverse inclusion we consider any $H\in\HenigDilatingCones{\c}$. By \Cref{th:BeziehungEff}~\ref{th:BeziehungEff:item:Eff(C2)inEff(C1)} we get $\Eff{f}{\s}{H}\subseteq\Eff{f}{\s}{H\cap\calH}$. Since $H\cap\calH$ is a convex cone with $\c\ohne\{0_\meinBildraum\}\subseteq\interior{H}\cap\interior{\calH}=\interior{H\cap\calH}$, see \Cref{th:Topologie:IntIntersection=IntersectionInt}, it is $H\cap\calH\in\HenigDilatingCones{\c}$ proving the claims.
	\end{proof}

	\Cref{th:PEff:PEffHenigEqui} allows us to only consider special forms of dilating cones. We will, for example, consider only dilating cones of the form $H=\interior[\meinBildraum]{H}\cup\{0_\meinBildraum\}$ in some of the later proofs.

	\subsection{Relationships between constrained and unconstrained vector optimisation problems}
    
	As the general goal of our paper is to express the set of properly efficient solutions of \eqref{gl:P} by properly efficient solutions of unconstrained problems, we introduce the corresponding unconstrained vector optimisation problem here under the same assumption \eqref{gl:VOPRestrVor}
	\begin{equation}
		\argminKegel{\c}\limits_{x\in\calD}\ f(x). \tag{$P_\calD$} \label{gl:PX}
	\end{equation}
	We will analyse important relationships between constrained and unconstrained vector optimisation problems by using cone convexity concepts for vector functions.

    \bigskip

	As a first result, we show that the intersection of $\s$ and the set of Henig-properly efficient solutions to the unconstrained problem is a subset of the set of Henig-properly efficient solutions to the constrained problem.

    \bigskip

	\begin{Satz}(\cite{GuntherKobisSchmollingTammer2023VectorialpenalisationvectoroptimisationreallineartopologicalspacesjournalArticle}) \label{th:MOP:Eff:PEff:ScapPEffInPEff}
		Assume \eqref{gl:VOPRestrVor}. Then, for every set $\calU\subseteq X$ with $\s\subseteq \calU\subseteq\calD$, we have
		\begin{align}
			\s\cap\PEffHenig{f}{\calD}{\c}&\subseteq\PEffHenig{f}{\calU}{\c}. \label{gl:MOP:Eff:PEff:ScapPEffInPEff}
		\end{align}
	\end{Satz}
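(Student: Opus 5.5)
The inclusion follows directly from the definitions, using the monotonicity of efficiency with respect to the feasible set. Let $\x\in\s\cap\PEffHenig{f}{\calD}{\c}$. By \Cref{def:HenigProperEfficiency} there is a dilating cone $H\in\HenigDilatingCones{\c}$ with $\x\in\Eff{f}{\calD}{H}$, which means
\[\left(f[\calD]-f(\x)\right)\cap(-H)\subseteq\{0_\meinBildraum\}.\]
I will show that the same cone $H$ witnesses $\x\in\PEffHenig{f}{\calU}{\c}$, so nothing about $H$ has to be modified.

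The key step is that $\calU\subseteq\calD$ gives $f[\calU]\subseteq f[\calD]$, and therefore
\[\left(f[\calU]-f(\x)\right)\cap(-H)\subseteq\left(f[\calD]-f(\x)\right)\cap(-H)\subseteq\{0_\meinBildraum\}.\]
Moreover $\x\in\s\subseteq\calU$, so $\x$ is a feasible point of the problem on $\calU$. Together these give $\x\in\Eff{f}{\calU}{H}$.

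Since $H\in\HenigDilatingCones{\c}$, it follows that $\x\in\bigcup_{H'\in\HenigDilatingCones{\c}}\Eff{f}{\calU}{H'}=\PEffHenig{f}{\calU}{\c}$, which is \eqref{gl:MOP:Eff:PEff:ScapPEffInPEff}. There is no real obstacle here. The only point to watch is feasibility: the intersection with $\s$ is exactly what guarantees $\x\in\calU$, since the efficiency definition requires $\x$ to belong to the feasible set. No convexity assumption on $f$ is needed.
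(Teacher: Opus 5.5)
Your proof is correct and follows the paper's route: keep the same dilating cone $H\in\HenigDilatingCones{\c}$ and use that efficiency passes from $\calD$ to any intermediate feasible set $\calU$ that contains $\x$. The only difference is that you verify this monotonicity step directly via $f[\calU]\subseteq f[\calD]$, whereas the paper cites Theorem~3.1 of its earlier reference for it.
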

	\begin{proof}
		Follows directly from the definition of Henig-proper efficiency and \cite[Theorem~3.1]{GuntherKobisSchmollingTammer2023VectorialpenalisationvectoroptimisationreallineartopologicalspacesjournalArticle} giving us
        \[\s\cap\PEffHenig{f}{\calD}{\c}=\s\cap\bigcup_{H\in\HenigDilatingCones{\c}} \Eff{f}{\s}{H}\subseteq\bigcup_{H\in\HenigDilatingCones{\c}} \Eff{f}{\calU}{H}=\PEffHenig{f}{\calU}{\c}.\qedhere\]
	\end{proof}

	Now, we use the introduced cone convexity concept (from \Cref{sec:gen-cone-convexity}) to show the reverse inclusion of \eqref{gl:MOP:Eff:PEff:ScapPEffInPEff} on open sets, allowing us to split the treatment of the constrained set into its interior and its boundary, which gives rise to a barrier for the solution set of \eqref{gl:P}.

    \bigskip

	\begin{Satz}\label{t-conv}
		Assume \eqref{gl:VOPRestrVor}, let $\calD$ be convex and let $f$ be $\c$-convex on $\calD$. Then
		\begin{gather}
			\interior{\s}\cap\PEffHenig{f}{\s}{\c}\subseteq\interior{\s}\cap\PEffHenig{f}{\calD}{\c}, \label{gl:PEff:IntCapPEff(S)InIntCapPEff(D)}\\
			\s\cap\PEffHenig{f}{\calD}{\c}\subseteq\PEffHenig{f}{\s}{\c}\subseteq\big(\interior{\s}\cap\PEffHenig{f}{\calD}{\c}\big)\cup\big(\s\cap\boundary[\meinUrbildraum]{\s}\big), \label{gl:SchrankePEffInt}\\
			\PEffHenig{f}{\s}{\c}\setminus\PEffHenig{f}{\calD}{\c}\subseteq \s\cap\boundary[\meinUrbildraum]{\s}. \label{gl:MOP:Eff:PEff:PEff/PEffInBD}
		\end{gather}
		If additionally $\s$ is open, then $\s\cap\PEffHenig{f}{\calD}{\c}=\PEffHenig{f}{\s}{\c}$.
	\end{Satz}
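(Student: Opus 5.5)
The core of the statement is the inclusion \eqref{gl:PEff:IntCapPEff(S)InIntCapPEff(D)}; everything else follows from it together with \Cref{th:MOP:Eff:PEff:ScapPEffInPEff}. So the plan is first to prove that inclusion by a local-to-global argument based on convexity, and then to assemble the remaining inclusions set-theoretically.

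For \eqref{gl:PEff:IntCapPEff(S)InIntCapPEff(D)}, take $\x\in\interior{\s}\cap\PEffHenig{f}{\s}{\c}$. By \Cref{th:PEff:PEffHenigEqui}, in the form \eqref{gl:PEff:PEffHenigEqui:OnlyOpenH}, we may pick $H\in\HenigDilatingCones{\c}$ with $H=\interior{H}\cup\{0_\meinBildraum\}$ and $\x\in\Eff{f}{\s}{H}$. I claim that the same $H$ witnesses $\x\in\Eff{f}{\calD}{H}$. Suppose instead that some $x\in\calD$ satisfies $f(x)-f(\x)\in -H\setminus\{0_\meinBildraum\}=-\interior{H}$. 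Since $\x\in\interior{\s}$ and the map $\lambda\mapsto \x+\lambda(x-\x)$ is continuous, there is $\lambda\in(0,1)$ with $x_\lambda\coloneqq\lambda x+(1-\lambda)\x\in\s$; this point lies in $\calD$ because $\calD$ is convex. By $\c$-convexity,
\[f(x_\lambda)\in \lambda f(x)+(1-\lambda)f(\x)-\c = f(\x)+\lambda\bigl(f(x)-f(\x)\bigr)-\c\subseteq f(\x)-\interior{H}-\c .\]
We have $\c\subseteq H$, since $\c\setminus\{0_\meinBildraum\}\subseteq\interior{H}$. The cone $H$ is convex, so by \Cref{th:Mengeneigenschaften:Kegel:konvexÄquivalenz} and the standard fact $\interior{H}+H\subseteq\interior{H}$ (translation of an open set, combined with $H+H\subseteq H$), we obtain $f(x_\lambda)-f(\x)\in-\interior{H}\subseteq -H\setminus\{0_\meinBildraum\}$. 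Here pointedness of $H$ follows from $0_\meinBildraum\notin\interior{H}$, since $H\neq \meinBildraum$; if $H=\meinBildraum$ the efficiency statement is handled separately or trivially. This contradicts $\x\in\Eff{f}{\s}{H}$. The main obstacle is exactly this step: justifying $\interior{H}+\c\subseteq\interior{H}$ and ensuring $-\interior{H}$ does not contain $0_\meinBildraum$. I would handle the latter by noting that if $0_\meinBildraum\in\interior{H}$, then $H=\meinBildraum$ by \Cref{th:Topologie:IntCone0IsCone}, and efficiency with respect to $\meinBildraum$ forces $f$ to be constant on the relevant set, so the claim again holds directly.

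With \eqref{gl:PEff:IntCapPEff(S)InIntCapPEff(D)} established, \eqref{gl:SchrankePEffInt} follows. The left inclusion is \Cref{th:MOP:Eff:PEff:ScapPEffInPEff} with $\calU=\s$. For the right inclusion, split $\PEffHenig{f}{\s}{\c}\subseteq\s$ into points of $\interior{\s}$ and points of $\s\setminus\interior{\s}\subseteq\s\cap\boundary[\meinUrbildraum]{\s}$, and apply \eqref{gl:PEff:IntCapPEff(S)InIntCapPEff(D)} to the first part. Then \eqref{gl:MOP:Eff:PEff:PEff/PEffInBD} is immediate: a point of $\PEffHenig{f}{\s}{\c}$ that does not lie in $\PEffHenig{f}{\calD}{\c}$ cannot lie in the first part of the union, so it lies in $\s\cap\boundary[\meinUrbildraum]{\s}$.

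Finally, if $\s$ is open then $\s\cap\boundary[\meinUrbildraum]{\s}=\emptyset$ and $\interior{\s}=\s$. In that case the chain in \eqref{gl:SchrankePEffInt} collapses to
\[\s\cap\PEffHenig{f}{\calD}{\c}\subseteq\PEffHenig{f}{\s}{\c}\subseteq\s\cap\PEffHenig{f}{\calD}{\c},\]
which gives the stated equality.
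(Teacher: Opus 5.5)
Your argument for the case $0_\meinBildraum\notin\interior{H}$ is correct, and it takes a different route from the paper. The paper passes to $\x\in\WEff{f}{\s}{H}$ via \eqref{gl:PEff:PEffHenigEqui:Eff=WEff}, converts $\c$-convexity into weak quasiconvexity, and cites Theorem~3.4 of the earlier 2023 paper for the local-to-global step. You instead run the segment argument directly for efficiency with respect to $H=\interior{H}\cup\{0_\meinBildraum\}$, using $\c\subseteq H$ and $\interior{H}+H\subseteq\interior{H}$. This makes your proof self-contained, and the absorption step is exactly the right ingredient: the paper's claim that $\c$-convexity yields $\interior{H}$-convexity would need $\c\subseteq\interior{H}$, which fails unless $H=\meinBildraum$. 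Your derivation of \eqref{gl:SchrankePEffInt}, of \eqref{gl:MOP:Eff:PEff:PEff/PEffInBD} and of the open case matches the paper's.

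The gap is the case $H=\meinBildraum$. You dismiss it by saying that efficiency with respect to $\meinBildraum$ makes $f$ constant ``on the relevant set'', so the claim ``holds directly''. It does not. From $\x\in\Eff{f}{\s}{\meinBildraum}$ you only get $f[\s]=\{f(\x)\}$, but you must produce some $H'\in\HenigDilatingCones{\c}$ with $\x\in\Eff{f}{\calD}{H'}$.

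If $\c$ has a dilating cone $H'\neq\meinBildraum$, the case is repairable. Constancy of $f$ on $\s$ gives $\x\in\Eff{f}{\s}{J'}$ for $J'=\interior{H'}\cup\{0_\meinBildraum\}$. Since $0_\meinBildraum\notin\interior{J'}$, your main argument applies to $J'$.

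If $\HenigDilatingCones{\c}=\{\meinBildraum\}$, you would need $f$ constant on all of $\calD$, and the statement is then false. This happens for pointed convex cones such as the lexicographic cone, as the paper itself remarks. Take $\meinUrbildraum=\calD=\R$, $\meinBildraum=\R^2$ and $\c=\{(a,b)\in\R^2 : a>0\}\cup(\{0\}\times\R_\geq)$. Let $f(x)=(\max\{x,0\},0)$, which is $\R^2_\geq$-convex and hence $\c$-convex, and let $\s=(-\infty,0]$.

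Then $\PEffHenig{f}{\s}{\c}=\Eff{f}{\s}{\R^2}=\s$, because $f\equiv 0_{\R^2}$ on $\s$. On the other hand $\PEffHenig{f}{\calD}{\c}=\Eff{f}{\R}{\R^2}=\emptyset$, because $f$ is not constant on $\R$. So \eqref{gl:PEff:IntCapPEff(S)InIntCapPEff(D)} fails on $(-\infty,0)$, and with $\s=(-\infty,0)$ the final equality for open $\s$ fails as well.

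This case therefore cannot be closed. Your proof is complete only under the extra hypothesis that $\c$ admits a Henig dilating cone other than $\meinBildraum$, or equivalently that $\meinBildraum$ is excluded from the dilating cones. The paper's proof overlooks the same case. Its identity \eqref{gl:PEff:PEffHenigEqui:Eff=WEff} breaks there, since $\WEff{f}{\s}{\meinBildraum}=\emptyset$ while $\Eff{f}{\s}{\meinBildraum}=\s$ whenever $f$ is constant on $\s$.
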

	\begin{proof}
		For $\x\in\interior{\s}\cap\PEffHenig{f}{\s}{\c}$ by \eqref{gl:PEff:PEffHenigEqui:Eff=WEff} there exists $H\in\HenigDilatingCones{\c}$ with $\x\in\WEff{f}{\s}{H}$. Because of $H\in\HenigDilatingCones{\c}$ by \Cref{th:LineareRäume:Funktionseigenschaften:Konvexität:Vektorwertig:AKonvexA1A2} from $\c$-convexity of $f$ on $\calD$ it follows that $f$ is also $\interior{H}$-convex on $\calD$ and by \Cref{th:CKonvImplik} also weakly $\interior{H}$-quasiconvex. By \cite[Theorem~3.4]{GuntherKobisSchmollingTammer2023VectorialpenalisationvectoroptimisationreallineartopologicalspacesjournalArticle} we now get $\x\in\WEff{f}{\calD}{H}$, which by again using \eqref{gl:PEff:PEffHenigEqui:Eff=WEff} results in $\x\in\PEffHenig{f}{\calD}{\c}$ proving \eqref{gl:PEff:IntCapPEff(S)InIntCapPEff(D)}.

		The first inclusion of \eqref{gl:SchrankePEffInt} is given by \eqref{gl:MOP:Eff:PEff:ScapPEffInPEff}, whereas the second one follows directly from \eqref{gl:PEff:IntCapPEff(S)InIntCapPEff(D)}. From this second inclusion we immediately get \eqref{gl:MOP:Eff:PEff:PEff/PEffInBD}. If $\s$ is open, we have $\s=\interior{\s}$ and $\s\cap\boundary[\meinUrbildraum]{\s}=\emptyset$, so that \eqref{gl:SchrankePEffInt} gives the last claim.
	\end{proof}

	\section{Vectorial penalisation} \label{sec:Penalisierungsmethoden:VektorielleBestrafung:RestrUnrestrPenUnrestr:PEffHenig}

	This section contains the vectorial penalisation approach for general vector optimisation problems.
	Given a vector function $f: \calD \to Y$ (as considered in assumption \eqref{gl:VOPRestrVor}) and a scalar function $\nu: \calD \to \R$, we are interested in the extended (penalised) vector  function
	\begin{equation*}
		\fvb:\calD\to Y\times\R,\quad\fvb\coloneqq(f,\nu) \label{gl:fvb}
	\end{equation*}
	and the extended (penalised) vector optimisation problem
	\begin{align}
		\argminKegel{\c\times\R_\geq}\limits_{x\in\calD}\ \fvb(x).\tag{$P^\vb_\calD$}\label{gl:PvbX}
	\end{align}
    Note that the underlying ordering cone $\c\times\R_\geq$ in the extended problem \eqref{gl:PvbX} is a nontrivial, pointed, convex cone in $Y \times \mathbb{R}$.

	In our approach, the function $\nu: \calD \to \R$ can be seen as a penalisation term w.r.t. the feasible set $\s$ of the vector optimisation problem \eqref{gl:P}, which is given in \eqref{gl:VOPRestrVor}.
    Our aim is to show profound relationships between the sets of Henig-properly efficient solutions of the constrained vector problem \eqref{gl:P} and the unconstrained vector problems \eqref{gl:PX} and \eqref{gl:PvbX}.

	Throughout this section, we consider the following assumptions:
	\begin{equation}
		\begin{cases}
			&\text{Assume } \eqref{gl:VOPRestrVor},\\
			&\nu:\calD\to\R,\\
			&\fvb\coloneqq(f,\nu).
		\end{cases} \tag{$A^\otimes$} \label{gl:VOPRestrVBVor}
	\end{equation}

	\subsection{Penalisation functions}

	Assume \eqref{gl:VOPRestrVBVor} and consider a set $\calU\subseteq \calD$ with $\emptyset\neq \s\subseteq \calU$. For the penalisation function $\nu: \calD \to \R$,  we are interested in properties of the following type:
	\begin{align}
		&\forall x^0\in\boundary[\meinUrbildraum]{\s}:\ \lev_\leq\big(\calU,\nu,\nu(x^0)\big)=\s\tag{$\mathzapf A$1}\label{A1}\\
        &\forall x^0\in\boundary[\meinUrbildraum]{\s}:\ \lev_=\left(\calU,\nu,\nu(x^0)\right)=\boundary[\meinUrbildraum]{\s}\tag{$\mathzapf A$2}\label{A2}\\
		&\forall x^0\in \s:\ \lev_=\big(\calU,\nu,\nu(x^0)\big)=\lev_\leq\big(\calU,\nu,\nu(x^0)\big)=\s \tag{$\mathzapf{A}$3}\label{A3}
	\end{align}

	In the context of property \eqref{A1}, we additionally impose the condition $\boundary[\meinUrbildraum]{\s}\subseteq \calU$.

	For simplicity, if $\nu$ satisfies one of the properties \eqref{A1}, \eqref{A2}, or \eqref{A3} based on some sets $\calU$ and $\s$, we briefly use the notation
	\[\nu\in \mathzapf{A}_i(\s,\calU) \quad (\text{for } i\in\{1,2,3\}).\]

	\begin{Proposition} (\cite{GuntherKobisSchmollingTammer2023VectorialpenalisationvectoroptimisationreallineartopologicalspacesjournalArticle}) \label{th:A1-A7Eigenschaften}
		Assume \eqref{gl:VOPRestrVBVor} and let $\s\subseteq\calU\subseteq\calD$.
		\begin{enumerate}
			\item \label{th:A1-A7Eigenschaften:item:A1algabges}
			Let $\boundary[\meinBildraum]{\s}\subseteq \calU$ and $\nu\in \mathzapf{A}_1(\s,\calU)$ for \eqref{A1}. Then $\s$ is closed and
			\begin{equation}
				\forall x^0\in\boundary[\meinBildraum]{\s}:\ \boundary[\meinBildraum]{\s}\subseteq\lev_=\big(\calU,\nu,\nu(x^0)\big) \label{gl:A1-A7Eigenschaften:A1iabdkonst}
			\end{equation}
			or equivalently
			\begin{equation}
				\forall x^0\in\boundary[\meinBildraum]{\s}:\ \lev_<\big(\calU,\nu,\nu(x^0)\big)\subseteq\interior[\meinBildraum]{\s}. \label{gl:A1-A7Eigenschaften:A1lev<InIcore}
			\end{equation}

            \item \label{th:A1-A7Eigenschaften:item:A1A2lev<=Icore}
    		Let $\boundary{\s}\subseteq \calU$ and assume $\nu$ satisfies \eqref{A1} and \eqref{A2} with $\nu\in \mathzapf{A}_1(\s,\calU)\cap \mathzapf{A}_2(\s,\calU)$. Then it is
    		\begin{equation}
    		      \forall x^0\in\boundary{\s}:\ \lev_<\big(\calU,\nu,\nu(x^0)\big)=\interior{\s}. \label{gl:A1-A7Eigenschaften:A1A2lev<=Icore}
    		\end{equation}

			\item \label{th:A1-A7Eigenschaften:item:A3lev<=leer}
			If $\nu\in \mathzapf{A}_3(\s,\calU)$, then
			\begin{equation}
				\forall x^0\in \s:\ \lev_<\big(\calU,\nu,\nu(x^0)\big)=\leereMenge. \label{gl:A1-A7Eigenschaften:A3lev<leer}
			\end{equation}
		\end{enumerate}
	\end{Proposition}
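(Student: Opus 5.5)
Each of the three items is a set-theoretic consequence of the level-set conditions, using only elementary topology (closure, interior, boundary). We treat them in order; note that $\boundary{\s}$ means the boundary in $\meinUrbildraum$ (the subscript $\meinBildraum$ in the statement is a typo).

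\emph{Item (1).} First we show $\s$ is closed. If $\boundary{\s}=\emptyset$, then $\cl{\s}=\interior{\s}\cup\boundary{\s}=\interior{\s}\subseteq\s$, so $\s$ is closed. Otherwise pick $x^0\in\boundary{\s}$. Since $\boundary{\s}\subseteq\calU$, we have $x^0\in\calU$ and trivially $\nu(x^0)\le\nu(x^0)$, so \eqref{A1} gives $x^0\in\lev_\leq(\calU,\nu,\nu(x^0))=\s$. Hence $\boundary{\s}\subseteq\s$ and $\cl{\s}=\s\cup\boundary{\s}=\s$.

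Next we prove \eqref{gl:A1-A7Eigenschaften:A1iabdkonst}. Fix $x^0,x^1\in\boundary{\s}$. Applying \eqref{A1} at $x^0$ and at $x^1$ gives $\nu(x^1)\le\nu(x^0)$ and $\nu(x^0)\le\nu(x^1)$, since both points lie in $\s$. So $\nu$ is constant on $\boundary{\s}$, which is exactly \eqref{gl:A1-A7Eigenschaften:A1iabdkonst}.

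For the equivalence with \eqref{gl:A1-A7Eigenschaften:A1lev<InIcore}, use \eqref{A1} to write $\lev_<(\calU,\nu,\nu(x^0))=\s\setminus\lev_=(\calU,\nu,\nu(x^0))$; this holds because every point of $\calU$ with $\nu\le\nu(x^0)$ lies in $\s$. Since $\s$ is closed, $\s=\interior{\s}\,\dot\cup\,\boundary{\s}$. Hence $\boundary{\s}\subseteq\lev_=$ holds if and only if $\s\setminus\lev_=\subseteq\interior{\s}$, which is the claimed equivalence.

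\emph{Item (2).} By \eqref{A2}, $\lev_=(\calU,\nu,\nu(x^0))=\boundary{\s}$. Combining this with the decomposition from item (1) gives
\[\lev_<\big(\calU,\nu,\nu(x^0)\big)=\s\setminus\boundary{\s}=\interior{\s},\]
where the last equality uses that $\s$ is closed.

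\emph{Item (3).} For $x^0\in\s$, \eqref{A3} gives $\lev_\leq=\lev_=$. Since $\lev_<=\lev_\leq\setminus\lev_=$, it follows that $\lev_<=\emptyset$.

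The only point needing care is the closedness of $\s$ in item (1), in particular the degenerate case $\boundary{\s}=\emptyset$. Closedness is what underlies the decomposition $\s=\interior{\s}\cup\boundary{\s}$ used in the equivalence of item (1) and in item (2).
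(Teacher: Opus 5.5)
Your proof is correct. The paper does not prove this proposition itself but imports it from \cite{GuntherKobisSchmollingTammer2023VectorialpenalisationvectoroptimisationreallineartopologicalspacesjournalArticle}, so there is no in-paper argument to compare against; your elementary argument is what is needed. You apply \eqref{A1} at each boundary point to get $\boundary{\s}\subseteq\s$, apply it symmetrically at two boundary points to get constancy of $\nu$ on $\boundary{\s}$, and read off the remaining claims from $\lev_{<}=\lev_{\leq}\setminus\lev_{=}$. One cosmetic remark: in item (2), $\s\setminus\boundary{\s}=\interior{\s}$ holds for every set, so closedness is not used at that step.
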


    \begin{Bemerkung}
        Several types of penalisation functions that satisfy assumptions \textup{[}\eqref{A1} and \eqref{A2}\textup{]} and \eqref{A3}, respectively, are known. All these penalisation functions can be used within our vectorial penalisation approach; see \cite{GuntherKobisSchmollingTammer2023VectorialpenalisationvectoroptimisationreallineartopologicalspacesjournalArticle} for an overview.
        Property \eqref{A2} can be viewed as a refinement of \eqref{A1}. Indeed, by \Cref{th:A1-A7Eigenschaften}~\ref{th:A1-A7Eigenschaften:item:A1A2lev<=Icore}, the combination of \eqref{A1} and \eqref{A2} yields the identity \eqref{gl:A1-A7Eigenschaften:A1A2lev<=Icore}, so boundary points are completely separated from the interior of $\s$ by the penalisation function. However, this additional separation does not seem to yield stronger results for properly efficient points in the contexts considered below. For our later arguments, the decisive ingredients are the ordering information from \eqref{A1} and the stronger constancy property encoded in \eqref{A3}. Therefore, we keep \eqref{A2} as a natural auxiliary assumption and include it for the sake of completeness, but from this point on we restrict our attention to penalisation functions satisfying \eqref{A1} or \eqref{A3}.
    \end{Bemerkung}

	\subsection{The effects of adding/removing a penalisation term} \label{sec:Fliege_type_results}

	In this section, we analyse the effects of adding/removing a penalisation term to/from a vector-valued objective function on the nature of the sets of Henig-properly efficient solutions of the vector optimisation problems.
    
    The purpose of this intermediate step is to isolate the precise mechanism by which the penalisation term changes proper efficiency. Before turning to the complete characterisation of the constrained problem, we first clarify in which situations properly efficient points are preserved and in which situations new properly efficient points may appear or disappear after penalisation. This prepares the main result conceptually: the final decomposition of the constrained problem is obtained by combining the points that remain visible in the original objective with those that are detected only after adding the penalisation term.

	Consider the (non-penalised) vector optimisation problem \eqref{gl:P} under the assumption \eqref{gl:VOPRestrVor}. We are interested in the unconstrained extended (penalised) vector optimisation problem
	\begin{align}
		\argminKegel{\c\times\R_\geq}\limits_{x\in\s}\ \fvb(x).\tag{$P^\vb_\s$}\label{gl:PvbS}
	\end{align}

	In the following part of the section, we will look at relationships between the solution sets of the non-penalised and the penalised vector problems. Although we will use $\s$ as the feasible set, we want to point out, that $\s$ can be any arbitrary set with $\s\subseteq\calD$.

	First we need some relationship between the Henig dilating cones of the ordering cone $\c$ of \eqref{gl:P} and the ones of $\c\times\R_\geq$ of \eqref{gl:PvbS}. For this we construct from a dilating cone for \eqref{gl:P} a new one for \eqref{gl:PvbS}, while for the other direction we simply take a suitable projection.
    \bigskip 
	\begin{Definition} \label{def:HenigDilatingConesVB}
		Assume \eqref{gl:VOPRestrVor}. For any $H\in\HenigDilatingCones{\c}$ and $h\in\interior{H}$ we define
        \begin{equation*}
            H^\vb_{h}\coloneqq\Menge{(v,t)\in\meinBildraum\times\R}{v\in H-th}
        \end{equation*}
		and
		\[\HenigDilatingConesVB{\c}\coloneqq\Menge{H^\vb_{h}}{H\in\HenigDilatingCones{\c},\ h\in\interior{H}}.\]
	\end{Definition}
    \bigskip 
    \begin{Bemerkung}
        The construction in \Cref{def:HenigDilatingConesVB} admits a simple geometric interpretation. For every $t\in\R$, the horizontal section of $H^\vb_h$ at height $t$ is given by
        \[
            \SetRestrictionToPoint{H^\vb_h}{(\meinBildraum,t)}=H-th.
        \]
        Hence, $H^\vb_h$ is obtained by stacking translated copies of $H$ along the $\R$-direction, where the translation is governed by the vector $h\in\interior{H}$. In particular, the section at height $0$ is precisely $H$, while for $t<0$ the sections move further into the interior of $H$. Indeed, for every nontrivial solid convex cone $H$ and every $h\in\interior{H}$ one has
        \[
            \interior{H}=H+\R_>h
            \qquad\text{and}\qquad
            \meinBildraum=H-\R_>h,
        \]
        see \textcite[Lemma~2.4]{GuentherPopoviciOrzan2025}. Consequently, as $t>0$ increases, the sections $H-th$ eventually sweep through the whole space, whereas for $t<0$ they remain inside $\interior{H}$. We will establish important properties in \Cref{th:PEff:H^ocross_hProperties} and \Cref{th:PEff:HxRInJ}.
    \end{Bemerkung}

    \bigskip 
	Given this newly constructed set we need to show some properties. Mainly, we need to show that $\HenigDilatingConesVB{\c}$ is indeed a family of dilating cones for $\c\times\R_\geq$, but in fact we can actually very easily get back the cone it was constructed from, showing that there are at least as many dilating cones for $\c\times\R_\geq$ as there are for $\c$.
    \bigskip 
    \begin{Lemma}\label{th:PEff:H^ocross_hProperties}
        Assume \eqref{gl:VOPRestrVor}. Then, the following assertions are true
        \begin{enumerate}
    		\item \label{item:PEff:H^ocross_hProperties:H^ocross_hINHeDC(CxR)}
            $\HenigDilatingConesVB{\c}\subseteq\HenigDilatingCones{\c\times\R_\geq}.$
            \item \label{item:PEff:H^ocross_hProperties:SetRestrictionH^ocross_h=H}
    		For any $H_h^\vb\in\HenigDilatingConesVB{\c}$ it is $\SetRestrictionToPoint{H^\vb_{h}}{(\meinBildraum,0)}=H$.
            \item \label{item:PEff:H^ocross_hProperties:MengeProjHx=HenigDC(C)}
            $\Menge{\SetRestrictionToPoint{H^\vb}{(\meinBildraum,0)}}{H^\vb\in\HenigDilatingCones{\c\times\R_\geq}}=\HenigDilatingCones{\c}.$
		\end{enumerate}
    \end{Lemma}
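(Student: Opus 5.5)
For (1), fix $H\in\HenigDilatingCones{\c}$ and $h\in\interior{H}$. First I check that $H^\vb_h$ is a convex cone. The set $H^\vb_h$ is the image of $H\times\R$ under the linear bijection $g(v,t)\coloneqq(v-th,t)$ from \Cref{th:Topology:ProductSpace:x+thContinuous}. Indeed, $(v,t)\in H^\vb_h$ iff $v+th\in H$, so $H^\vb_h=g[H\times\R]$. Since $H\times\R$ is a convex cone and $g$ is linear, $H^\vb_h$ is a convex cone.

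Next I compute the interior. The map $g$ is a continuous linear bijection whose inverse $(w,t)\mapsto(w+th,t)$ is of the same form, so it is also continuous. Hence $g$ is a homeomorphism and
\[\interior{H^\vb_h}=g\big[\interior{H\times\R}\big]=g\big[\interior{H}\times\R\big],\]
using \Cref{th:Topologie:InneresVonKreuzprodukt} with the convex sets $H$ and $\R$. In other words, $(v,t)\in\interior{H^\vb_h}$ iff $v+th\in\interior{H}$.

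It remains to show that $(c,s)\in\interior{H^\vb_h}$ for every $(c,s)\in(\c\times\R_\geq)\setminus\{0\}$, i.e.\ that $c+sh\in\interior{H}$. I split into two cases.
\begin{itemize}
\item If $s=0$, then $c\neq0_\meinBildraum$, so $c\in\c\setminus\{0_\meinBildraum\}\subseteq\interior{H}$.
\item If $s>0$, then $sh\in\interior{H}$ and $c\in\c\subseteq\cl{H}$. The ray $(0,\infty)h\subseteq\interior{H}$, and $\interior{H}+H\subseteq\interior{H}$ because $H+H\subseteq H$ (\Cref{th:Mengeneigenschaften:Kegel:konvexÄquivalenz}) and $\interior{H}+x$ is open. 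Since $\c\subseteq H$ (because $\c\setminus\{0\}\subseteq H$ and $0\in H$), we get $c+sh\in\interior{H}$.
\end{itemize}
This proves (1).

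For (2), the claim is immediate from the definition: at $t=0$ the condition $v\in H-0\cdot h$ reads $v\in H$.

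For (3), the inclusion ``$\supseteq$'' follows from (1) and (2). Given $H\in\HenigDilatingCones{\c}$, pick any $h\in\interior{H}$, which is nonempty since $\c\neq\{0\}$. Then $H^\vb_h\in\HenigDilatingCones{\c\times\R_\geq}$ and its restriction at height $0$ is $H$.

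For ``$\subseteq$'', let $H^\vb\in\HenigDilatingCones{\c\times\R_\geq}$ and set $H\coloneqq\SetRestrictionToPoint{H^\vb}{(\meinBildraum,0)}$. This set is the preimage of $H^\vb$ under the linear map $v\mapsto(v,0)$, so it is a convex cone. For $c\in\c\setminus\{0\}$ we have $(c,0)\in\interior{H^\vb}$. Hence there is a point at height $0$ in the interior, and \Cref{th:Topologie:ProductTopology:RestrictionOfInteriorOfSet=InteriorOfRestriction} (in fact already its first inclusion, which needs no convexity) gives $c\in\SetRestrictionToPoint{\interior{H^\vb}}{(\meinBildraum,0)}\subseteq\interior{H}$. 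So $H\in\HenigDilatingCones{\c}$.

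The main obstacle I expect is the interior computation in (1): making sure $g$ is a homeomorphism and that the product-interior lemma applies. The rest is bookkeeping.
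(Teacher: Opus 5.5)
Your proof is correct and follows essentially the same route as the paper. For (1), the homeomorphism $(v,t)\mapsto(v-th,t)$ together with the product-interior lemma gives $(v,t)\in\interior{H^\vb_h}$ iff $v+th\in\interior{H}$; (2) is read off the definition; and (3) uses the restriction-of-interior proposition together with (1)--(2). Your small streamlinings are all valid: you obtain the convex-cone property of $H^\vb_h$ as a linear image of $H\times\R$, you note that only the convexity-free first inclusion of \Cref{th:Topologie:ProductTopology:RestrictionOfInteriorOfSet=InteriorOfRestriction} is needed, and you explicitly check $\interior{H}\neq\emptyset$ for the reverse inclusion in (3).
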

    \begin{proof}
        \begin{enumerate}[leftmargin=*]
            \item[\ref{item:PEff:H^ocross_hProperties:H^ocross_hINHeDC(CxR)}] For any $H\in\HenigDilatingCones{\c}$ and any $h\in\interior{H}$ it is by definition
    		\[H^\vb_{h}=\Menge{(v,t)\in\meinBildraum\times\R}{v\in H-t h}.\]
    		For any $(v,t)\in H^\vb_{h}$ and any $\lambda\in\R$ it is $v\in H-th$ and therefore since $H$ is a cone $\lambda(v+th)\in H$, resulting in $\lambda v\in H-\lambda th$, so that $\lambda(v,t)\in H^\vb_{h}$. For any $(v^1,t_1),(v^2,t_2)\in H^\vb_{h}$ and $\lambda\in(0,1)$ we have $v^1+t_1h,v^2+t_2h\in H$, where the convexity of $H$ gives $\lambda(v^1+t_1h)+(1-\lambda)(v^2+t_2h)\in H$ so that $\lambda v^1+(1-\lambda)v^2\in H-\left(\lambda t_1+(1-\lambda)t_2\right)h$ and so $\lambda(v^1,t_1)+(1-\lambda)(v^2,t_2)\in H^\vb_{h}$. Thus, $H^\vb_{h}$ is a convex cone.\\
    		Since $H$ is a convex cone by \Cref{th:Mengeneigenschaften:Kegel:konvexÄquivalenz} we get $H+H=H$ giving $H+h\subseteq H$ and thus $H\subseteq H-h$. Therefore, it is $\c\ohne\{0_\meinBildraum\}\subseteq\interior{H}\subseteq\interior{H}-h$. Since by \Cref{th:Topologie:InneresVonKreuzprodukt} $\interior[\meinBildraum\times\R]{H\times\R}=\interior[\meinBildraum]{H}\times\R$ is open in $\meinBildraum\times\R$, by shifting we with \Cref{th:Topology:ProductSpace:x+thContinuous} get
    		\begin{equation}
    			\interior[\meinBildraum\times\R]{H^\vb_{h}}=\Menge{(v,t)\in\meinBildraum\times\R}{v\in \interior{H}-t h} \label{gl:PEff:HeDC(C)HeDC(CxR):Int(Hhh)}
    		\end{equation}
    		resulting in $(\c\ohne\{0_\meinBildraum\})\times\R_\geq\subseteq\interior[\meinBildraum\times\R]{H^\vb_{h}}$, because for every $t\in\R_>$ it is $th\in\interior{H}$. Since for every $\epsilon>0$ we have $\interior[\meinBildraum]{H}\subseteq\interior[\meinBildraum]{H}-\epsilon h$ it is also $\{0_\meinBildraum\}\times\R_>\subseteq\interior{H^\vb_{h}}$, proving $H^\vb_{h}\in\HenigDilatingCones{\c\times\R_\geq}$ and thus part~\ref{item:PEff:H^ocross_hProperties:H^ocross_hINHeDC(CxR)}.
    
            \item[\ref{item:PEff:H^ocross_hProperties:SetRestrictionH^ocross_h=H}]
            It is $H^\vb_{h}\cap\left(\meinBildraum\times\{0\}\right)=H\times\{0\}$, so that $\SetRestrictionToPoint{H^\vb_{h}}{(\meinBildraum,0)}=\ProjectionSet{\meinBildraum}{H^\vb_{h}\cap\left(\meinBildraum\times\{0\}\right)}=H$.
    
            \item[\ref{item:PEff:H^ocross_hProperties:MengeProjHx=HenigDC(C)}] For any $H^\vb\in\HenigDilatingCones{\c\times\R_\geq}$ it is $H^\vb\cap\left(\meinBildraum\times\{0\}\right)$ as intersection of convex cones again a convex cone, so that also $\SetRestrictionToPoint{H^\vb}{(\meinBildraum,0)}=\ProjectionSet{\meinBildraum}{H^\vb\cap\left(\meinBildraum\times\{0\}\right)}$ is a convex cone. From $H^\vb\in\HenigDilatingCones{\c\times\R_\geq}$ we get $\left(\c\ohne\{0_\meinBildraum\}\right)\times\{0\}\subseteq\interior[\meinBildraum\times\R]{H^\vb}\cap(\meinBildraum\times\{0\})$. By \Cref{th:Topologie:ProductTopology:RestrictionOfInteriorOfSet=InteriorOfRestriction} this gives
    		\[\c\ohne\{0\}\subseteq\SetRestrictionToPoint{\interior[\meinBildraum\times\R]{H^\vb}}{(\meinBildraum,0)}=\interior[\meinBildraum]{\SetRestrictionToPoint{H^\vb}{(\meinBildraum,0)}}\]
    		and thus proves $\SetRestrictionToPoint{H^\vb}{(\meinBildraum,0)}\in\HenigDilatingCones{\c}$, giving the first inclusion in \ref{item:PEff:H^ocross_hProperties:MengeProjHx=HenigDC(C)}. \par
            The reverse inclusion for concluding \ref{item:PEff:H^ocross_hProperties:MengeProjHx=HenigDC(C)} follows by parts~\ref{item:PEff:H^ocross_hProperties:H^ocross_hINHeDC(CxR)} and \ref{item:PEff:H^ocross_hProperties:SetRestrictionH^ocross_h=H} giving us \[\HenigDilatingCones{\c}=\Menge{\SetRestrictionToPoint{H^\vb_{h}}{(\meinBildraum,0)}}{H^\vb_h\in\HenigDilatingConesVB{\c}}\subseteq \Menge{\SetRestrictionToPoint{H^\vb}{(\meinBildraum,0)}}{H^\vb\in\HenigDilatingCones{\c\times\R_\geq}}. \qedhere\]
        \end{enumerate}
    \end{proof}

	We demonstrate the results of \Cref{th:PEff:H^ocross_hProperties} in the following example, where we use Bishop-Phelps cones and their properties (see, e.g., \cite{HaJahn2023}, \cite{GuntherKhazayelStrugariuTammer2024BishopPhelpsTypeScalarizationVectorOptimizationRealTopologicalLinearSpacespreprint} and the references therein) to immediately get representations of the interior of these cones.

	\begin{Beispiel}
		We consider the cone
		\[H^\vb=\Menge{(x,y,z)\in\R^3}{\frac{2}{\sqrt3}\skp{(1,1,1)}{(x,y,z)}\geq\norm{(x,y,z)}_{l^2}}.\]
		with the by $\ell^2(\R^2)$ induced topology and $\c=\R^2_\geq$.

		It is $H^\vb\in\HenigDilatingCones{\c\times\R_\geq}=\HenigDilatingCones{\R^3_\geq}$, because for every $(x,y,z)\in\R^3_\gneq$ it is
		\[\norm{(x,y,z)}_{\ell^2}^2=x^2+y^2+z^2\leq(x+y+z)^2<\frac{4}{3}(x+y+z)^2=\left(\frac2{\sqrt3}\skp{(1,1,1)}{(x,y,z)}\right)^2,\]
		which by \textcite[Lemma~2.12]{GuntherKhazayelStrugariuTammer2024BishopPhelpsTypeScalarizationVectorOptimizationRealTopologicalLinearSpacespreprint} implies $\R^3_\gneq\subseteq\interior[\R^3]{H^\vb}$.

		As in \Cref{th:PEff:H^ocross_hProperties} we want to take a closer look at $\SetRestrictionToPoint{H^\vb}{(\R^2,0)}$ and get
		\begin{align*}
			H&\coloneqq\SetRestrictionToPoint{H^\vb}{(\R^2,0)}=\ProjectionSet{\R^2}{H^\vb\cap\left(\R^2\times\{0\}\right)} \\
			&=\Menge{(x,y)\in\R^2}{\frac{2}{\sqrt{3}}\skp{(1,1)}{(x,y)}\geq\norm{(x,y)}_{\ell^2}}.
		\end{align*}
		It is $H\in\HenigDilatingCones{\c}$, because for every $(x,y)\in\R^2_\gneq$ it is $x^2+y^2\leq(x+y)^2<\frac43(x+y)^2$ as before, indicating $\R^2_\gneq\subseteq\interior[\R^2]{H}$.

		Considering now $H^\vb_h$ for $h=(1,1)$ we have
		\begin{align*}
			H^\vb_h&=\Menge{(x,y,t)\in\R^3}{(x,y)\in H-th} \\
			&=\Menge{(x,y,t)\in\R^3}{\frac{2}{\sqrt3}\skp{(1,1)}{(x+t,y+t)}\geq\norm{(x+t,y+t)}_{\ell^2}}.
		\end{align*}
        Obviously, it is $\SetRestrictionToPoint{H^\vb_h}{(\R^2,0)}=H$ and with the same argument as before $\R^3_\gneq\subseteq\interior{H^\vb_h}$.
	\end{Beispiel}

	Equipped with the knowledge about these properties of the constructed dilating cones, we can now study the relationships between Henig-properly efficient points of \eqref{gl:P} and \eqref{gl:PvbS} by taking dilating cones $H^\vb_h\in\HenigDilatingConesVB{\c}$ as exemplary representatives for $\HenigDilatingCones{\c\times\R_\geq}$.
    \bigskip 
	\begin{Satz}\label{th:Bestrafung:vektorielleBestrafung:PEff:PEff/PEff+}
		Under condition \eqref{gl:VOPRestrVBVor} it is
		\begin{align}
			\begin{split}
				&\PEffHenig{f}{\s}{\c}\setminus\PEffHenig{\fvb}{\s}{\c\times\R_\geq}\\
				&\subseteq\Menge{\x\in\PEffHenig{f}{\s}{\c}}{\exists y\in\s:\ \nu(y)<\nu(\x)}.
			\end{split} \label{gl:Bestrafung:vektorielleBestrafung:PEff:PEff/PEff+}
		\end{align}
	\end{Satz}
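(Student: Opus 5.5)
We prove the contrapositive. Let $\x\in\PEffHenig{f}{\s}{\c}$ satisfy $\nu(\x)\leq\nu(y)$ for all $y\in\s$; we show $\x\in\PEffHenig{\fvb}{\s}{\c\times\R_\geq}$. This gives the inclusion \eqref{gl:Bestrafung:vektorielleBestrafung:PEff:PEff/PEff+}, since the right-hand side consists exactly of the points of $\PEffHenig{f}{\s}{\c}$ violating this minimality of $\nu$.

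By definition there is $H\in\HenigDilatingCones{\c}$ with $\x\in\Eff{f}{\s}{H}$. We pick any $h\in\interior{H}$; this is possible because $\c\ohne\{0_\meinBildraum\}\neq\emptyset$ lies in $\interior{H}$. We then use the dilating cone $H^\vb_h$ from \Cref{def:HenigDilatingConesVB}, which lies in $\HenigDilatingCones{\c\times\R_\geq}$ by \Cref{th:PEff:H^ocross_hProperties}~\ref{item:PEff:H^ocross_hProperties:H^ocross_hINHeDC(CxR)}. It remains to show $\x\in\Eff{\fvb}{\s}{H^\vb_h}$.

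Suppose $y\in\s$ satisfies $\fvb(y)-\fvb(\x)\in-H^\vb_h$. Since $H^\vb_h$ is a cone, this reads $(f(\x)-f(y),\nu(\x)-\nu(y))\in H^\vb_h$, i.e.
\[f(\x)-f(y)\in H-\bigl(\nu(\x)-\nu(y)\bigr)h = H+\bigl(\nu(y)-\nu(\x)\bigr)h.\]
Set $t\coloneqq\nu(y)-\nu(\x)\geq0$. Since $h\in H$, we have $th\in H$, and $H+H\subseteq H$ by \Cref{th:Mengeneigenschaften:Kegel:konvexÄquivalenz}, so $f(\x)-f(y)\in H$. Efficiency of $\x$ with respect to $H$ then forces $f(y)=f(\x)$.

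The main obstacle is to conclude that also $t=0$. If $t>0$, we get $0=f(\x)-f(y)\in H+th$, i.e. $-th\in H$. But $th\in\interior{H}$ as well, so $0_\meinBildraum\in\interior{H}$ by convexity and \Cref{th:Topologie:ConvexSetHalfopenLinesInInterior}. Then $H=\meinBildraum$ by \Cref{th:Topologie:IntCone0IsCone}. This case cannot be excluded a priori: $\c$ pointed does not make $H$ pointed, and if $H=\meinBildraum$ then efficiency of $\x$ already forces $f[\s]=\{f(\x)\}$.

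We handle this case separately. If $H=\meinBildraum$ and $f$ is constant on $\s$, we replace $H$ by a proper dilating cone whenever $\HenigDilatingCones{\c}$ contains one; $\x$ stays efficient for it because $f$ is constant. The cone $H^\vb_h$ is then proper: $H-th=\meinBildraum$ would again put $0_\meinBildraum$ into $\interior{H}$. If $\HenigDilatingCones{\c}=\{\meinBildraum\}$, we instead use the dilating cone $\meinBildraum\times\R_\geq$. It lies in $\HenigDilatingCones{\c\times\R_\geq}$ because $(\c\ohne\{0\})\times\R_\geq\subseteq\meinBildraum\times\R_\geq$ is not contained in its interior $\meinBildraum\times\R_>$… so this fallback fails. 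We therefore simply require $H\neq\meinBildraum$ from the start, and note that the degenerate situation with only the trivial dilating cone has to be checked directly.

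With $H\neq\meinBildraum$ we obtain $t=0$, hence $\fvb(y)=\fvb(\x)$. This proves $\x\in\Eff{\fvb}{\s}{H^\vb_h}$ and thus the claim.
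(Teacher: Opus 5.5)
There is a genuine gap in your proposal: the case $\HenigDilatingCones{C}=\{Y\}$ with $f$ constant on $\s$ is left open. That gap cannot be closed, because the statement is false in exactly that case.

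\textbf{The non-degenerate case.} Whenever the cone $H$ with $\x\in\Eff{f}{\s}{H}$ can be chosen different from $Y$, your argument is correct. It is essentially the paper's proof run as a contrapositive. The paper takes $\x\in\WEff{f}{\s}{H}$ and uses the description \eqref{gl:PEff:HeDC(C)HeDC(CxR):Int(Hhh)} of $\interior{H^\vb_h}$, noting that for $t\leq0$ the section $\interior{H}-th$ stays inside $\interior{H}$. You work with $\Eff{f}{\s}{H}$ instead and exclude $t>0$ because $-th\in H$ together with $th\in\interior{H}$ forces $H=Y$.

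\textbf{The open case is a counterexample.} Take $X=\calD=\R$, $\s=[0,1]$, $f\equiv 0_{\R^2}$ and $\nu(x)=x$. Let $C=\{(a,b)\in\R^2: a>0\}\cup(\{0\}\times\R_\geq)$ be the lexicographic cone, which is a nontrivial, pointed, convex cone.
\begin{enumerate}
\item Every $H\in\HenigDilatingCones{C}$ equals $\R^2$. From $(0,1)\in\interior{H}$ we get $(-\varepsilon,1)\in H$ for some $\varepsilon>0$. Every $(a,b)$ can be written as $\lambda(-\varepsilon,1)+(a+\lambda\varepsilon,b-\lambda)$, and the second summand lies in $C$ once $\lambda>0$ is large. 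Hence $\PEffHenig{f}{\s}{C}=\Eff{f}{\s}{\R^2}=\s$.
\item Every $H^\vb\in\HenigDilatingCones{C\times\R_\geq}$ equals $\R^3$. By \Cref{th:PEff:H^ocross_hProperties}~\ref{item:PEff:H^ocross_hProperties:MengeProjHx=HenigDC(C)}, $\SetRestrictionToPoint{H^\vb}{(\R^2,0)}=\R^2$. Then \eqref{gl:PEff:IntHxRInIntJ} gives $0_{\R^3}\in\interior{H^\vb}$, so $H^\vb=\R^3$ by \Cref{th:Topologie:IntCone0IsCone}.
\item Since $\fvb(x)=(0,0,x)$ is not constant on $\s$, $\PEffHenig{\fvb}{\s}{C\times\R_\geq}=\Eff{\fvb}{\s}{\R^3}=\emptyset$.
\end{enumerate}
So the point $0$ lies in the left-hand side of \eqref{gl:Bestrafung:vektorielleBestrafung:PEff:PEff/PEff+}, yet no $y\in\s$ satisfies $\nu(y)<0$. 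More generally, the inclusion fails exactly when $\HenigDilatingCones{C}=\{Y\}$, $f$ is constant on $\s$, and $\nu$ is non-constant on $\s$ but attains its minimum there.

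\textbf{Why the paper's proof misses this.} The paper obtains $\x\in\WEff{f}{\s}{H}$ from \Cref{th:PEff:PEffHenigEqui}. The equality $\bigcup_H\Eff{f}{\s}{H}=\bigcup_H\WEff{f}{\s}{H}$ there rests on $\Eff{f}{\s}{H}\subseteq\WEff{f}{\s}{H}$. That inclusion fails for $H=Y$: $\WEff{f}{\s}{Y}=\emptyset$ always, while $\Eff{f}{\s}{Y}=\s$ for constant $f$. So the condition $H\neq Y$ enters the paper's route silently through weak efficiency, and the paper has the same hole you found.

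\textbf{How to repair your proof.} The fix is not a direct check but an extra hypothesis, such as $\HenigDilatingCones{C}\neq\{Y\}$ or $f$ not constant on $\s$. Under either hypothesis, your replacement step (any proper $H'\in\HenigDilatingCones{C}$ works when $f$ is constant) together with your main argument gives a complete proof. The aborted fallback using $Y\times\R_\geq$ should be deleted; in the degenerate case no fallback can work, since $\HenigDilatingCones{C\times\R_\geq}=\{Y\times\R\}$.
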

	\begin{proof}
		For any $\x\in\PEffHenig{f}{\s}{\c}\setminus\PEffHenig{\fvb}{\s}{\c\times\R_\geq}$ by \Cref{th:PEff:PEffHenigEqui} there exists $H\in\HenigDilatingCones{\c}$ with $\x\in\WEff{f}{\s}{H}$ and we can assume the form $H=\interior[\meinBildraum]{H}\cup\{0_\meinBildraum\}$. We take any $h\in\interior[\meinBildraum]{H}$ and construct $H^\vb_{h}\in\HenigDilatingConesVB{\c}$ with that. By \Cref{th:PEff:H^ocross_hProperties}~\ref{item:PEff:H^ocross_hProperties:H^ocross_hINHeDC(CxR)} it is $H^\vb_{h}\in\HenigDilatingCones{\c\times\R_\geq}$. From $\x\notin\PEffHenig{\fvb}{\s}{\c\times\R_\geq}$ we get $\x\notin\WEff{\fvb}{\s}{H^\vb_{h}}$ and thus there exists $y\in\s$ with $\fvb(y)<_{H^\vb_{h}}\fvb(\x)$. Together with $\x\in\WEff{f}{\s}{H}$ this gives
		\[\fvb(y)\in\fvb(\x)-\interior[\meinBildraum\times\R]{H^\vb_{h}}\setminus\left(\interior[\meinBildraum]{H}\times\R\right).\]
		From the convexity of $H$ by \Cref{th:Mengeneigenschaften:Kegel:konvexÄquivalenz} we get that $H+H\subseteq H$ and thus $H+h\subseteq H$. Looking at the definition of $H^\vb_{h}$, we thus have $H^\vb_{h}\ohne\left(H\times\R\right)\subseteq\meinBildraum\times\R_>$, because for $t\leq0$, it is $-th\in H$ since $H$ is a cone, so that $H-th\subseteq H$ and therefore with \eqref{gl:PEff:HeDC(C)HeDC(CxR):Int(Hhh)} 
		\[\left(\interior[\meinBildraum\times\R]{H^\vb_{h}}\cap(\meinBildraum\times\R_\leq)\right)\setminus\left(\interior[\meinBildraum]{H}\times\R\right)=\emptyset.\]
		So, we have $\nu(y)<\nu(\x)$, proving \eqref{gl:Bestrafung:vektorielleBestrafung:PEff:PEff/PEff+}.
	\end{proof}

    \begin{Bemerkung}
        \Cref{th:Bestrafung:vektorielleBestrafung:PEff:PEff/PEff+} identifies a necessary mechanism for losing proper efficiency when passing from the original problem to the penalised one. If a point $\x$ is properly efficient for $f$ but not for $\fvb$, then this can only happen because another feasible point has a strictly smaller penalisation value. Thus, the theorem shows that the discrepancy between the two solution sets is driven entirely by the ordering induced by $\nu$ and not by an arbitrary distortion of the objective values. Furthermore, it should be noticed that the result does not require for the additional term to be a penalisation, and thus holds true when the added term is a new objective.
    \end{Bemerkung}

    For efficiency, \textcite[Th.~4.18]{GuntherKobisSchmollingTammer2023VectorialpenalisationvectoroptimisationreallineartopologicalspacesjournalArticle} proved a stronger result than \Cref{th:Bestrafung:vektorielleBestrafung:PEff:PEff/PEff+}, yielding equality in \eqref{gl:Bestrafung:vektorielleBestrafung:PEff:PEff/PEff+} under the additional condition $f(y)=f(\x)$. The following example shows that, for proper efficiency, this equality cannot be expected without further assumptions. More precisely, even for a closed cone, a closed feasible set, and a bounded continuous objective function, we cannot guarantee the existence of a point $y\in\s$ such that $f(y)=f(\x)$ and $\nu(y)<\nu(\x)$.
    \bigskip 
	\begin{Beispiel}
		In $\R^2$ we consider the natural ordering cone $\c=\R^2_\geq$ and the function
        \[f:\R^2\to\R^2,\qquad f(x)\coloneqq\begin{cases}
            \left(x_1,\sqrt{|x_1x_2|}\right),&\text{if}\ \norm{x}_{\ell^\infty}\leq1,\\
            \frac{\left(x_1,\sqrt{|x_1x_2|}\right)}{\norm{x}_{\ell^\infty}},&\text{if}\ \norm{x}_{\ell^\infty}>1.
        \end{cases}\]
        Let $\s\coloneqq\Menge{x\in\R^2}{x_1\geq0,\ x_2\geq x_1}\subset\c$, then for $\x\coloneqq0_{\R^2}\in\s$ it is $f(\x)=0_{\R^2}$ and we get $\x\in\PEffHenig{f}{\s}{\c}$. As a Henig dilating cone we can choose the half space $H=\Menge{v\in\R^2}{\skp{(1,1)}{v}\geq0}$ and see that for any $y\in\s\ohne\{0_{\R^2}\}$ it is $\skp{(1,1)}{f(y)}\geq0$ and $\skp{(1,1)}{f(y)}=0$ if and only if $y_1=0$, so that $f(y)=0_{\R^2}$. Thus, $H\in\HenigDilatingCones{\c}$ and $\x\in\Eff{f}{\s}{H}$, proving $\x\in\PEffHenig{f}{\s}{\c}$.

        We now take $\nu:\R^2\to\R$, $\nu(x)\coloneqq-\norm{x}_{\ell^\infty}$. If there would exist $H^\vb\in\HenigDilatingCones{\c\times\R_\geq}$ with $\x\in\Eff{f}{\s}{H}$, then it would be $(0,0,1)\in(\c\times\R_\geq)\setminus\{0_{\R^3}\}\subseteq\interior{H^\vb}$ and thus there would exist $n\in\N$ such that $\left(-\tfrac{1}{n},-\tfrac{1}{n},1\right)\in H^\vb$ and then by scaling $(1,1,n)\in H^\vb$. For $y=(n,n)\in\s$ we obtain $\fvb(y)=(1,1,-n)\in\fvb(\x)-H^\vb=-H^\vb$ and consequently $\x\notin\Eff{\fvb}{\s}{H^\vb}$ implying $\x\notin\PEffHenig{\fvb}{\s}{\c\times\R_\geq}$. Furthermore, for any $y\in\s\setminus\{0_\R^2\}$ we have $f(y)\neq0_{\R^2}$.

	\end{Beispiel}

    We can also find an example of a bounded continuous objective function, a compact feasible set and a closed cone for which all the counter examples $y$ for the proper efficiency in \eqref{gl:PvbS} of some properly efficient point $\x$ of \eqref{gl:P} have different function values $f(y)\neq f(\x)$.
    \bigskip 
    \begin{Beispiel}
        Consider the identity function $f:\R\to\R$, $f(x)\coloneqq x$, over the feasible set $\s=[0,1]$ with the ordering cone $\c=\R_\geq$. We can see that the unique minimum of $f$ over $\s$ is the point $\x=0$. Since $\c$ is its own dilating cone we thus get $\x\in\PEffHenig{f}{\s}{\c}$. Now we introduce $\nu:\R\to\R$, $\nu(x)\coloneqq-\sqrt x$. If there would exist $H\in\HenigDilatingCones{\c\times\R_\geq}$ with $\x\in\Eff{\fvb}{\s}{H}$, then it would be $(0,1)\in\interior H$ and thus there would exists $\epsilon\in(0,1)$ such that $(-\sqrt\epsilon,1)\in H$ and by scaling also $(-\epsilon,\sqrt\epsilon)\in H$. It is however $\fvb(\epsilon)=(\epsilon,-\sqrt\epsilon)\in\fvb(\x)-H$, proving $\x\notin\PEffHenig{\fvb}{\s}{\c\times\R_\geq}$. For every $y\in\s\setminus\{0\}$ it is $f(y)>f(\x)$.
    \end{Beispiel}

	For the exclusion with both proper efficiency sets swapt we require additional properties of general dilating cones in $\HenigDilatingCones{\c\times\R_\geq}$.
\bigskip 
	\begin{Proposition}\label{th:PEff:HxRInJ}
		Assume \eqref{gl:VOPRestrVor}. Let $H^\vb\in\HenigDilatingCones{\c\times\R_\geq}$. Then 
		\begin{align}
			\SetRestrictionToPoint{H^\vb}{(\meinBildraum,0)}\times\R_\geq&\subseteq H^\vb, \label{gl:PEff:HxRInJ}\\
			\interior[\meinBildraum]{\SetRestrictionToPoint{H^\vb}{(\meinBildraum,0)}}\times\R_\geq&\subseteq\interior[\meinBildraum\times\R]{H^\vb}. \label{gl:PEff:IntHxRInIntJ}
		\end{align}
	\end{Proposition}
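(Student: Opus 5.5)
Write $J\coloneqq\SetRestrictionToPoint{H^\vb}{(\meinBildraum,0)}$. The plan is to obtain both inclusions from two ingredients: the convex cone property of $H^\vb$, via \Cref{th:Mengeneigenschaften:Kegel:konvexÄquivalenz} ($H^\vb+H^\vb\subseteq H^\vb$), and the fact that the vertical direction lies in the interior of $H^\vb$.

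\textbf{The vertical direction.} Since $(0_\meinBildraum,1)\in(\c\times\R_\geq)\setminus\{0_{\meinBildraum\times\R}\}\subseteq\interior{H^\vb}$ and $H^\vb$ is a cone, we get $(0_\meinBildraum,t)\in\interior{H^\vb}$ for every $t>0$, because scaling by a positive factor is a homeomorphism. Also $(0_\meinBildraum,0)\in H^\vb$: the set $H^\vb$ is a nonempty cone, so multiplying any element by $\lambda=0$ gives the origin.

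\textbf{Proof of \eqref{gl:PEff:HxRInJ}.} Let $v\in J$ and $t\geq0$. Then $(v,0)\in H^\vb$ and $(0_\meinBildraum,t)\in H^\vb$. Adding them and using $H^\vb+H^\vb\subseteq H^\vb$ gives $(v,t)\in H^\vb$.

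\textbf{Proof of \eqref{gl:PEff:IntHxRInIntJ}.} Let $v\in\interior[\meinBildraum]{J}$ and $t\geq0$. I want to apply the equality case of \Cref{th:Topologie:ProductTopology:RestrictionOfInteriorOfSet=InteriorOfRestriction} to the convex set $S=H^\vb$ with $y=0$. This requires some $x$ with $(x,0)\in\interior{H^\vb}$.
\begin{itemize}
\item If $\c\neq\{0_\meinBildraum\}$, which holds by \eqref{gl:VOPRestrVor}, pick $c\in\c\setminus\{0_\meinBildraum\}$. Then $(c,0)\in\interior{H^\vb}$ by the dilating property.
\item The proposition then yields $\interior[\meinBildraum]{J}=\SetRestrictionToPoint{\interior{H^\vb}}{(\meinBildraum,0)}$, so $(v,0)\in\interior{H^\vb}$.
\end{itemize}
Now $(v,t)=(v,0)+(0_\meinBildraum,t)$ with $(0_\meinBildraum,t)\in H^\vb$, again using $t\geq0$ and the vertical-direction step. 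It remains to show $\interior{H^\vb}+H^\vb\subseteq\interior{H^\vb}$. For any $k\in H^\vb$, the translate $\interior{H^\vb}+k$ is open, since translation is a homeomorphism, and it is contained in $H^\vb+H^\vb\subseteq H^\vb$. Hence it lies in $\interior{H^\vb}$, which gives $(v,t)\in\interior{H^\vb}$.

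\textbf{Expected obstacle.} The only delicate point is the interior statement, namely verifying the hypothesis of \Cref{th:Topologie:ProductTopology:RestrictionOfInteriorOfSet=InteriorOfRestriction} that the slice at height $0$ meets $\interior{H^\vb}$. This is exactly where nontriviality of $\c$ enters. If one wanted to avoid that proposition, an alternative is a neighbourhood argument: take a neighbourhood $U$ of $v$ in $\meinBildraum$ with $U\subseteq J$ and a neighbourhood of $(0_\meinBildraum,t+1)$ inside $\interior{H^\vb}$, and combine them by convexity. The route via the proposition seems cleaner, and it is the one I would use.
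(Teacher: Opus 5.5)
Your proof is correct and is in substance the paper's argument: the decisive step in both is the equality case of \Cref{th:Topologie:ProductTopology:RestrictionOfInteriorOfSet=InteriorOfRestriction}, made applicable by $(c,0)\in\interior{H^\vb}$ for some $c\in\c\ohne\{0_\meinBildraum\}$. Your additivity arguments ($H^\vb+H^\vb\subseteq H^\vb$ and $\interior{H^\vb}+H^\vb\subseteq\interior{H^\vb}$) merely replace the paper's explicit convex combination $\frac{t}{s}(0_\meinBildraum,s)+\bigl(1-\frac{t}{s}\bigr)\bigl(\frac{s}{s-t}x,0\bigr)$ and its product-interior step $\interior{J\times\R_\geq}=\interior{J}\times\R_>$ for $t>0$.

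The one thing to discard is your closing alternative. Points near $(0_\meinBildraum,t+1)$ have positive second coordinate, so combining them with $U\times\{0\}$ cannot produce a neighbourhood of $(v,0)$ when $t=0$. Without a point like $(c,0)$ the claim actually fails: for $\c=\{0_\meinBildraum\}$, the dilating cone $H^\vb=\meinBildraum\times\R_\geq$ violates \eqref{gl:PEff:IntHxRInIntJ}.
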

	\begin{proof}
		For any $(x,t)\in \SetRestrictionToPoint{H^\vb}{(\meinBildraum,0)}\times\R_\geq$ if $t=0$ we immediately get $(x,t)\in H^\vb$. So from now on we assume $t>0$. Take any $s>t$. Since $0_\meinBildraum\in\c$ and $H^\vb\in\HenigDilatingCones{\c\times\R_\geq}$ we get $(0_\meinBildraum,s)\in H^\vb$. Because $\SetRestrictionToPoint{H^\vb}{(\meinBildraum,0)}$ is a cone and $x\in \SetRestrictionToPoint{H^\vb}{(\meinBildraum,0)}$ we also get $\frac{s}{s-t}x\in \SetRestrictionToPoint{H^\vb}{(\meinBildraum,0)}$ and thus $\left(\frac{s}{s-t}x,0\right)\in H^\vb$. By convexity of $H^\vb$ since $H^\vb\in\HenigDilatingCones{\c\times\R_\geq}$ we now get
		\[(x,t)=\frac{t}{s}(0_\meinBildraum,s)+\left(1-\frac{t}{s}\right)\left(\frac{s}{s-t}x,0\right)\in H^\vb.\]
		This proves that $\SetRestrictionToPoint{H^\vb}{(\meinBildraum,0)}\times\R_\geq\subseteq H^\vb$.

		For the last claim we with \eqref{gl:PEff:HxRInJ} already have $\interior[\meinBildraum\times\R]{\SetRestrictionToPoint{H^\vb}{(\meinBildraum,0)}\times\R_\geq}\subseteq\interior[\meinBildraum\times\R]{H^\vb}$. Since by \Cref{th:Topologie:InneresVonKreuzprodukt} it is $\interior[\meinBildraum\times\R]{\SetRestrictionToPoint{H^\vb}{(\meinBildraum,0)}\times\R_\geq}=\interior[\meinBildraum]{\SetRestrictionToPoint{H^\vb}{(\meinBildraum,0)}}\times\R_>$ we only need to prove that $\interior[\meinBildraum]{\SetRestrictionToPoint{H^\vb}{(\meinBildraum,0)}}\times\{0\}\subseteq\interior[\meinBildraum\times\R]{H^\vb}$. By assumption \eqref{gl:VOPRestrVor} it is $\c$ nontrivial, so there exists $c\in\c\ohne\{0_\meinBildraum\}$, and since $H^\vb\in\HenigDilatingCones{C\times\R_\geq}$ it is $(c,0)\in\c\setminus\{0_\meinBildraum\}\times\{0\}\subseteq\interior[\meinBildraum\times\R]{H^\vb}$. Thus, by \Cref{th:Topologie:ProductTopology:RestrictionOfInteriorOfSet=InteriorOfRestriction}
		\begin{equation*}
			\interior[\meinBildraum]{\SetRestrictionToPoint{H^\vb}{(\meinBildraum,0)}}\times\{0\}=\SetRestrictionToPoint{\interior[\meinBildraum\times\R]{H^\vb}}{(\meinBildraum,0)}\times\{0\}\subseteq\interior[\meinBildraum\times\R]{H^\vb}. \label{gl:PEff:Int(JRestriction)x0=IntHRestrictionx0InIntJ}
		\end{equation*}
		This proves the last claim.
        %
		%
	\end{proof}

	Noting that the sets on the left sides of \eqref{gl:PEff:HxRInJ} and \eqref{gl:PEff:IntHxRInIntJ} (adding $(0_T,0)$ to later) are cones containing $\c\times\R_\geq$, we can already see the next result.
    \bigskip 
	\begin{Satz}\label{th:Bestrafung:vektorielleBestrafung:PEff:PEff+/PEff}
		Under condition \eqref{gl:VOPRestrVBVor} it is
		\begin{align}
			\begin{split}
				&\PEffHenig{\fvb}{\s}{\c\times\R_\geq}\setminus\PEffHenig{f}{\s}{\c}\\
				&=\Menge{\x\in\PEffHenig{\fvb}{\s}{\c\times\R_\geq}}{\begin{aligned}
						&\exists H\in\HenigDilatingCones{\c}\exists y\in\s: \\
						&f(y)\lneq_{H}f(\x),\ \nu(y)>\nu(\x)
					\end{aligned}}
			\end{split}
		\end{align}
	\end{Satz}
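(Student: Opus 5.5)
We prove the two inclusions separately. The inclusion ``$\subseteq$'' follows from \Cref{th:PEff:HxRInJ}. The inclusion ``$\supseteq$'' is where I expect the real difficulty, because the existential quantifier over $H$ is weak.

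\emph{Inclusion ``$\subseteq$''.} Let $\x\in\PEffHenig{\fvb}{\s}{\c\times\R_\geq}\setminus\PEffHenig{f}{\s}{\c}$. By \Cref{def:HenigProperEfficiency} there is $H^\vb\in\HenigDilatingCones{\c\times\R_\geq}$ with $\x\in\Eff{\fvb}{\s}{H^\vb}$. Put $H\coloneqq\SetRestrictionToPoint{H^\vb}{(\meinBildraum,0)}$; by \Cref{th:PEff:H^ocross_hProperties}~\ref{item:PEff:H^ocross_hProperties:MengeProjHx=HenigDC(C)} we have $H\in\HenigDilatingCones{\c}$. Since $\x\notin\PEffHenig{f}{\s}{\c}$, in particular $\x\notin\Eff{f}{\s}{H}$. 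Hence there is $y\in\s$ with $f(\x)-f(y)\in H\ohne\{0_\meinBildraum\}$, i.e.\ $f(y)\lneq_H f(\x)$. Suppose $\nu(y)\leq\nu(\x)$. Then \eqref{gl:PEff:HxRInJ} gives
\[
\fvb(\x)-\fvb(y)=\bigl(f(\x)-f(y),\,\nu(\x)-\nu(y)\bigr)\in H\times\R_\geq\subseteq H^\vb .
\]
This vector is nonzero because its first component is nonzero, so $\x\notin\Eff{\fvb}{\s}{H^\vb}$, a contradiction. Therefore $\nu(y)>\nu(\x)$, and $\x$ lies in the right-hand set, with $H$ the restriction of the cone witnessing $\x\in\PEffHenig{\fvb}{\s}{\c\times\R_\geq}$.

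\emph{Inclusion ``$\supseteq$''.} Let $\x\in\PEffHenig{\fvb}{\s}{\c\times\R_\geq}$, and let $H\in\HenigDilatingCones{\c}$ and $y\in\s$ satisfy $f(y)\lneq_H f(\x)$ and $\nu(y)>\nu(\x)$. We must show $\x\notin\PEffHenig{f}{\s}{\c}$. The step $f(y)\lneq_H f(\x)$ immediately gives $\x\notin\Eff{f}{\s}{H}$. It also gives $\x\notin\Eff{f}{\s}{H'}$ for every $H'\in\HenigDilatingCones{\c}$ with $H\subseteq H'$, by \Cref{th:BeziehungEff}~\ref{th:BeziehungEff:item:Eff(C2)inEff(C1)}.

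To exclude all remaining dilating cones I would argue by contradiction. Suppose $\x\in\Eff{f}{\s}{H'}$ for some $H'\in\HenigDilatingCones{\c}$. By \Cref{th:PEff:PEffHenigEqui} we may replace $H'$ by $H'\cap H$ and assume $H'\subseteq H$. The plan is to feed $H'$ and a direction $h\in\interior{H'}$ into \Cref{def:HenigDilatingConesVB} and to compare the cone $(H')^\vb_h$ with the cone $H^\vb$ witnessing $\x\in\Eff{\fvb}{\s}{H^\vb}$. The aim is to produce a point of $\s$ that dominates $\x$ with respect to $H'$ in the $f$-component, using $y$ and the strict inequality $\nu(y)>\nu(\x)$.

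\emph{Main obstacle.} The hypothesis $f(\x)-f(y)\in H\ohne\{0_\meinBildraum\}$ says nothing about membership in a smaller cone $H'$. So a single, possibly very wide, $H$ does not by itself rule out every dilating cone. I expect this step to need the witness $H$ to be linked to $\x$. The natural choice, as in the proof of ``$\subseteq$'', is to take $H$ as the restriction $\SetRestrictionToPoint{H^\vb}{(\meinBildraum,0)}$ of a cone witnessing $\x\in\PEffHenig{\fvb}{\s}{\c\times\R_\geq}$. If the argument does not close for an unrelated $H$, I would first test a two-dimensional candidate counterexample with $\c=\R^2_\geq$, a wide $H$ containing $(-1,1)$, $f(\x)=0_{\R^2}$ and $f(y)=(1,-1)$. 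This would decide whether the literal ``$\exists H$'' reading can be proved or whether the statement has to be read with this coupling.
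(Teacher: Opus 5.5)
Your argument for ``$\subseteq$'' is correct and is the paper's argument. You restrict a witnessing cone $H^\vb$ to $H=\SetRestrictionToPoint{H^\vb}{(\meinBildraum,0)}\in\HenigDilatingCones{\c}$, use $H\times\R_\geq\subseteq H^\vb$ from \eqref{gl:PEff:HxRInJ}, and read off $\nu(y)>\nu(\x)$. The paper then calls ``$\supseteq$'' clear, but your suspicion is right: that direction is false, so your gap cannot be closed and the plan via $(H')^\vb_h$ cannot succeed. Since $\meinBildraum\in\HenigDilatingCones{\c}$ always, the right-hand condition is equivalent (take $H=\meinBildraum$) to ``some $y\in\s$ has $f(y)\neq f(\x)$ and $\nu(y)>\nu(\x)$''. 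For a concrete counterexample, let $\meinUrbildraum=\meinBildraum=\R$, $\c=\R_\geq$, $\calD=\s=\{0,1\}$, $f(x)=\nu(x)=x$ and $\x=0$. Then $\x\in\Eff{f}{\s}{\R_\geq}$ with $\R_\geq\in\HenigDilatingCones{\c}$. Also $\x\in\Eff{\fvb}{\s}{H^\vb}$ for $H^\vb=\{(a,t)\in\R^2 : a+t\geq0\}\in\HenigDilatingCones{\c\times\R_\geq}$. Yet $H=\R$ and $y=1$ give $f(y)\lneq_H f(\x)$ and $\nu(y)>\nu(\x)$. So $\x$ belongs to the right-hand set but not to the left-hand one.

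The coupling you propose does not save the equality either. Take $\meinBildraum=\R^2$, $\c=\R^2_\geq$, $\calD=\s=\{0,1\}\subseteq\R$, $f(0)=(0,0)$, $f(1)=(1,-1)$, $\nu(0)=0$, $\nu(1)=1$ and $H^\vb=\{(a,b,t)\in\R^3 : a+2b+10t\geq0\}$. Since $\fvb(0)-\fvb(1)=(-1,1,-1)\notin H^\vb$, this cone witnesses $0\in\PEffHenig{\fvb}{\s}{\c\times\R_\geq}$, and its restriction $\{(a,b) : a+2b\geq0\}$ contains $f(0)-f(1)=(-1,1)$. But $H'=\{(a,b) : 2a+b\geq0\}\in\HenigDilatingCones{\c}$ does not contain $(-1,1)$, so $0\in\PEffHenig{f}{\s}{\c}$.

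The correct equality has ``$\forall H\in\HenigDilatingCones{\c}\ \exists y\in\s$'' in place of ``$\exists H\,\exists y$''. Then ``$\supseteq$'' is immediate from the definition. For ``$\subseteq$'', apply your argument to $H\cap\SetRestrictionToPoint{H^\vb}{(\meinBildraum,0)}$, which lies in $\HenigDilatingCones{\c}$ as in the proof of \Cref{th:PEff:PEffHenigEqui}. A point $y$ with $f(\x)-f(y)$ in this intersection minus $\{0_\meinBildraum\}$ satisfies both $f(y)\lneq_H f(\x)$ and $\nu(y)>\nu(\x)$. Only ``$\subseteq$'' is used later, in \Cref{th:Bestrafung:VektorielleBestrafung:PEff:BeziehungenPEffs}, so the main results are not affected.
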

	\begin{proof}
		For any $\x\in\PEffHenig{\fvb}{\s}{\c\times\R_\geq}\setminus\PEffHenig{f}{\s}{\c}$ by \Cref{th:PEff:PEffHenigEqui}~\eqref{gl:PEff:PEffHenigEqui:Eff=WEff} there exists $H^\vb\in\HenigDilatingCones{\c\times\R_\geq}$ such that $\x\in\Eff{\fvb}{\s}{H^\vb}$.
		Setting $H\coloneqq\SetRestrictionToPoint{H^\vb}{(\meinBildraum,0)}$, by \Cref{th:PEff:H^ocross_hProperties}~\ref{item:PEff:H^ocross_hProperties:MengeProjHx=HenigDC(C)} it is $H\in\HenigDilatingCones{\c}$ and by \Cref{th:PEff:HxRInJ} it is $H\times\R_\geq\subseteq H^\vb$, so that because of $\x\in\Eff{\fvb}{\s}{H^\vb}\setminus\Eff{f}{\s}{H}$ there exists $y\in\s$ with
		\[\fvb(\x)-\fvb(y)\in\left(\left(H\setminus\{0_\meinBildraum\}\right)\times\R\right)\setminus\left(\left(H\times\R_\geq\right)\setminus\{(0_\meinBildraum,0)\}\right)=\left(H\setminus\{0_\meinBildraum\}\right)\times\R_<.\]
		Since the other direction is clear, this proves the claim.
	\end{proof}
    
	\begin{Bemerkung}
        In contrast to \Cref{th:Bestrafung:vektorielleBestrafung:PEff:PEff/PEff+}, \Cref{th:Bestrafung:vektorielleBestrafung:PEff:PEff+/PEff} explains how proper efficiency may be created by penalisation. A point can become properly efficient for the penalised problem although it is not properly efficient for the original one if the improvement in the objective value is accompanied by a larger penalisation value. Hence, penalisation does not merely remove infeasible behaviour; it can also stabilise boundary points by making the trade-off between objective improvement and constraint violation explicit. This is especially interesting when considering additional objectives instead of a penalisation term.
    \end{Bemerkung}
    \bigskip 
	Even though based on \Cref{th:PEff:PEffHenigEqui} it is enough to consider only weakly efficient solutions to some Henig dilating cone, we do not get the property $\PEffHenig{\fvb}{\s}{\c\times\R_\geq}\setminus\PEffHenig{f}{\s}{\c}=\emptyset$ as we get it for the case of weak efficiency, which was proved in \textcite[Prop.~4.16]{GuntherKobisSchmollingTammer2023VectorialpenalisationvectoroptimisationreallineartopologicalspacesjournalArticle}

	\subsection{Vectorial penalisation - relationships between the solution sets}\label{sec:main_results_vectorial_penalisation}
    
    The next theorem contains the main result of our paper, namely the complete characterisation of the set of Henig-properly efficient solutions of the constrained vector optimisation problem by the sets of Henig-properly efficient solutions of two unconstrained vector optimisation problems using the vectorial penalisation approach. To achieve this, we assume in part~\ref{th:Bestrafung:VektorielleBestrafung:PEff:BeziehungenPEffs:A3} of this theorem that condition \eqref{gl:MOP:Eff:PEff:PEff/PEffInBD} of \Cref{t-conv} holds. A sufficient condition for \eqref{gl:MOP:Eff:PEff:PEff/PEffInBD} is the convexity of $\calD$ and the $C$-convexity of $f$ on $\calD$.
    \bigskip 
	\begin{Satz}\label{th:Bestrafung:VektorielleBestrafung:PEff:BeziehungenPEffs}
		Assume \eqref{gl:VOPRestrVBVor}.
		\begin{enumerate}
			\item \label{th:Bestrafung:VektorielleBestrafung:PEff:BeziehungenPEffs:A1}
			Let $\s$ be closed and assume \eqref{A1}. Then
            \begin{gather}
                \PEffHenig{f}{\s}{\c}\supseteq\left(\interior[\meinBildraum]{\s}\cap\PEffHenig{f}{\calD}{\c}\right)\cup\left(\boundary[\meinBildraum]{\s}\cap\PEffHenig{\fvb}{\calD}{\c\times\R_\geq}\right) \label{gl:Bestrafung:VektorielleBestrafung:PEff:BeziehungenPEffs:DIsub} \\
                \PEffHenig{f}{\s}{\c}\supseteq\left(\s\cap\PEffHenig{f}{\calD}{\c}\right)\cup\left(\boundary[\meinBildraum]{\s}\cap\PEffHenig{\fvb}{\calD}{\c\times\R_\geq}\right) \label{gl:Bestrafung:VektorielleBestrafung:PEff:BeziehungenPEffs:Dsub}
            \end{gather}


			\item \label{th:Bestrafung:VektorielleBestrafung:PEff:BeziehungenPEffs:A3}
			Assume $\nu\in \mathzapf{A}_3(\s,\calD)$. Then we have
			\begin{equation}
				\PEffHenig{f}{\s}{\c}=\s\cap\PEffHenig{\fvb}{\calD}{\c\times\R_\geq}.\label{gl:PEffS=ScapPEff+beiA3}
			\end{equation}
			Furthermore, let $\s$ be closed. Then we have \eqref{gl:Bestrafung:VektorielleBestrafung:PEff:BeziehungenPEffs:DIsub}.
			If additionally \eqref{gl:MOP:Eff:PEff:PEff/PEffInBD} holds, so
			\begin{equation}
				\PEffHenig{f}{\s}{\c}\setminus\PEffHenig{f}{\calD}{\c}\subseteq\boundary[\meinBildraum]{\s} \label{gl:Bestrafung:VektorielleBestrafung:PEff:BeziehungenPEffs:PEffS/PEffDForm}
			\end{equation}
            is valid then we even get the equalities
			\begin{equation}
				\PEffHenig{f}{\s}{\c}=\left(\interior[\meinBildraum]{\s}\cap\PEffHenig{f}{\calD}{\c}\right)\cup\left(\boundary[\meinBildraum]{\s}\cap\PEffHenig{\fvb}{\calD}{\c\times\R_\geq}\right) \label{gl:Bestrafung:VektorielleBestrafung:PEff:BeziehungenPEffs:DI=}
			\end{equation}
			\begin{equation}
				\PEffHenig{f}{\s}{\c}=\left(\s\cap\PEffHenig{f}{\calD}{\c}\right)\cup\left(\boundary[\meinBildraum]{\s}\cap\PEffHenig{\fvb}{\calD}{\c\times\R_\geq}\right) \label{gl:Bestrafung:VektorielleBestrafung:PEff:BeziehungenPEffs:D=}
			\end{equation}
		\end{enumerate}
	\end{Satz}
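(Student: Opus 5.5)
The plan is to handle the ``$\supseteq$'' inclusions first, since they only need the ordering information carried by $\nu$. After that comes the equality \eqref{gl:PEffS=ScapPEff+beiA3}, and the two final decompositions then follow by bookkeeping with $\s=\interior{\s}\cup(\s\cap\boundary{\s})$.

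\emph{Part~\ref{th:Bestrafung:VektorielleBestrafung:PEff:BeziehungenPEffs:A1}.} Applying \Cref{th:MOP:Eff:PEff:ScapPEffInPEff} with $\calU=\s$ gives $\interior{\s}\cap\PEffHenig{f}{\calD}{\c}\subseteq\s\cap\PEffHenig{f}{\calD}{\c}\subseteq\PEffHenig{f}{\s}{\c}$. So both \eqref{gl:Bestrafung:VektorielleBestrafung:PEff:BeziehungenPEffs:DIsub} and \eqref{gl:Bestrafung:VektorielleBestrafung:PEff:BeziehungenPEffs:Dsub} reduce to the boundary term.

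Take $\x\in\boundary{\s}\cap\PEffHenig{\fvb}{\calD}{\c\times\R_\geq}$. Since $\s$ is closed, $\x\in\s$. Choose $H^\vb\in\HenigDilatingCones{\c\times\R_\geq}$ with $\x\in\Eff{\fvb}{\calD}{H^\vb}$ and put $H\coloneqq\SetRestrictionToPoint{H^\vb}{(\meinBildraum,0)}$. Then $H\in\HenigDilatingCones{\c}$ by \Cref{th:PEff:H^ocross_hProperties}~\ref{item:PEff:H^ocross_hProperties:MengeProjHx=HenigDC(C)}.

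Suppose some $y\in\s$ satisfies $f(\x)-f(y)\in H\setminus\{0_\meinBildraum\}$. By \eqref{A1} with $\calU=\calD$ and $x^0=\x$, we have $y\in\s=\lev_\leq(\calD,\nu,\nu(\x))$, so $\nu(\x)-\nu(y)\ge0$. \Cref{th:PEff:HxRInJ} then gives
\[\fvb(\x)-\fvb(y)\in\bigl(H\times\R_\geq\bigr)\setminus\{(0_\meinBildraum,0)\}\subseteq H^\vb\setminus\{(0_\meinBildraum,0)\},\]
which contradicts $\x\in\Eff{\fvb}{\calD}{H^\vb}$. Hence $\x\in\Eff{f}{\s}{H}\subseteq\PEffHenig{f}{\s}{\c}$.

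\emph{Part~\ref{th:Bestrafung:VektorielleBestrafung:PEff:BeziehungenPEffs:A3}, inclusion ``$\supseteq$'' in \eqref{gl:PEffS=ScapPEff+beiA3}.} Condition \eqref{A3} with $\calU=\calD$ says that $\nu$ is constant on $\s$ and that $\lev_\leq(\calD,\nu,\nu(\x))=\s$ for every $\x\in\s$. The argument of Part~1 therefore goes through verbatim for every $\x\in\s$, not only for boundary points. This also yields \eqref{gl:Bestrafung:VektorielleBestrafung:PEff:BeziehungenPEffs:DIsub} when $\s$ is closed, because the interior term is again handled by \Cref{th:MOP:Eff:PEff:ScapPEffInPEff}.

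\emph{Inclusion ``$\subseteq$''.} This is the main obstacle. Take $\x\in\PEffHenig{f}{\s}{\c}$. By \Cref{th:Bestrafung:vektorielleBestrafung:PEff:PEff/PEff+} together with \eqref{gl:A1-A7Eigenschaften:A3lev<leer}, no $y\in\s$ has $\nu(y)<\nu(\x)$, so $\x\in\PEffHenig{\fvb}{\s}{\c\times\R_\geq}$. It remains to pass from $\s$ to $\calD$ for $\fvb$. The points $y\in\calD\setminus\s$ satisfy $\nu(y)>\nu(\x)$, so $\fvb(\x)-\fvb(y)$ has strictly negative last coordinate.

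What I would try first is to take $H=\interior{H}\cup\{0_\meinBildraum\}$ with $\x\in\Eff{f}{\s}{H}$, fix $h\in\interior{H}$, and test $H^\vb_h$. Points $y\in\s$ are excluded exactly as in the proof of \Cref{th:Bestrafung:vektorielleBestrafung:PEff:PEff/PEff+}. For $y\notin\s$, however, the section at height $t<0$ is $H+|t|h\subseteq\interior{H}$, which is not automatically disjoint from $f(\x)-f[\calD\setminus\s]$.

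So the key step is to choose the dilating cone more carefully: shrink the sections at negative heights, e.g.\ intersect with a second cone from \Cref{th:PEff:PEffHenigEqui}, so that the negative half is thin. The difficulty is that $(\c\setminus\{0_\meinBildraum\})\times\{0\}$ must stay in the interior, so the negative half cannot be removed entirely. If no such construction works in general, I would look for the extra structure of \eqref{A3} that rules out the remaining points $y\notin\s$, and I expect this is where the real work lies.

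\emph{Final equalities.} Assume \eqref{gl:Bestrafung:VektorielleBestrafung:PEff:BeziehungenPEffs:PEffS/PEffDForm}, so that ``$\supseteq$'' is already known. Let $\x\in\PEffHenig{f}{\s}{\c}$.
\begin{itemize}
\item If $\x\in\interior{\s}$, then \eqref{gl:Bestrafung:VektorielleBestrafung:PEff:BeziehungenPEffs:PEffS/PEffDForm} forces $\x\in\PEffHenig{f}{\calD}{\c}$, since $\interior{\s}\cap\boundary{\s}=\emptyset$ by openness of the interior.
\item Otherwise $\x\in\boundary{\s}$, and \eqref{gl:PEffS=ScapPEff+beiA3} gives $\x\in\PEffHenig{\fvb}{\calD}{\c\times\R_\geq}$.
\end{itemize}
This proves \eqref{gl:Bestrafung:VektorielleBestrafung:PEff:BeziehungenPEffs:DI=}. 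Equality \eqref{gl:Bestrafung:VektorielleBestrafung:PEff:BeziehungenPEffs:D=} then follows, because $\s\cap\PEffHenig{f}{\calD}{\c}\subseteq\PEffHenig{f}{\s}{\c}$ by \Cref{th:MOP:Eff:PEff:ScapPEffInPEff}.
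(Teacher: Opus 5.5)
Your arguments for part~(1), for the inclusion $\s\cap\PEffHenig{\fvb}{\calD}{\c\times\R_\geq}\subseteq\PEffHenig{f}{\s}{\c}$ under \eqref{A3}, and for the final case distinction (taking the equality as given) are correct. They match the paper, which routes through $\PEffHenig{\fvb}{\s}{\c\times\R_\geq}$ and \Cref{th:Bestrafung:vektorielleBestrafung:PEff:PEff+/PEff}; that theorem's proof is exactly your restriction-cone argument via \Cref{th:PEff:HxRInJ}.

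The genuine gap is the inclusion $\PEffHenig{f}{\s}{\c}\subseteq\s\cap\PEffHenig{\fvb}{\calD}{\c\times\R_\geq}$. You leave it open, and both \eqref{gl:Bestrafung:VektorielleBestrafung:PEff:BeziehungenPEffs:DI=} and \eqref{gl:Bestrafung:VektorielleBestrafung:PEff:BeziehungenPEffs:D=} rest on it. Your suspicion is justified: the gap cannot be closed, because the inclusion is false.

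Take $X=Y=\calD=\R$, $\s=\{0\}$, $\c=\R_\geq$, $f(x)=x$ and $\nu(x)=x^2$. Then $\nu$ satisfies \eqref{A3} with $\calU=\calD$, $\s$ is closed and $f$ is linear. Moreover $\PEffHenig{f}{\s}{\c}=\{0\}$ and $\PEffHenig{f}{\calD}{\c}=\emptyset$, so \eqref{gl:Bestrafung:VektorielleBestrafung:PEff:BeziehungenPEffs:PEffS/PEffDForm} holds as well. But every $H^\vb\in\HenigDilatingCones{\c\times\R_\geq}$ has $(1,0)\in\interior{H^\vb}$, hence contains $(1,-\delta)$ for some $\delta>0$. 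The point $y=-\delta$ then gives $\fvb(y)-\fvb(0)=(-\delta,\delta^2)=-\delta(1,-\delta)\in-H^\vb\setminus\{(0,0)\}$. So $0\notin\PEffHenig{\fvb}{\calD}{\c\times\R_\geq}$. The right-hand sides of \eqref{gl:PEffS=ScapPEff+beiA3}, \eqref{gl:Bestrafung:VektorielleBestrafung:PEff:BeziehungenPEffs:DI=} and \eqref{gl:Bestrafung:VektorielleBestrafung:PEff:BeziehungenPEffs:D=} are therefore empty, while the left-hand side is $\{0\}$. Since $f$ is linear and $\calD$ convex, the subsequent corollary fails too.

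This is exactly the obstruction you describe. Since $(\c\setminus\{0_Y\})\times\{0\}\subseteq\interior{H^\vb}$, every dilating cone counts a gain in $f$ accompanied by a sufficiently small increase of $\nu$ as an improvement. Infeasible points whose penalty increase is of higher order than their gain in $f$ (here $\delta^2$ versus $\delta$) therefore destroy proper efficiency in the penalised problem, whichever $H^\vb$ you build.

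The paper's own argument does not close this step either. \Cref{th:Bestrafung:vektorielleBestrafung:PEff:PEff/PEff+} on $\calD$, together with the fact that under \eqref{A3} no $y\in\calD$ has $\nu(y)<\nu(\bar x)$ for $\bar x\in\s$, only gives $\s\cap\PEffHenig{f}{\calD}{\c}\subseteq\PEffHenig{\fvb}{\calD}{\c\times\R_\geq}$. That says nothing about points of $\PEffHenig{f}{\s}{\c}\setminus\PEffHenig{f}{\calD}{\c}$, which in the example is the point $0$.

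What does survive under \eqref{A3} is:
\begin{itemize}
\item the inclusion $\supseteq$ you proved;
\item \eqref{gl:Bestrafung:VektorielleBestrafung:PEff:BeziehungenPEffs:DIsub} for closed $\s$;
\item $\PEffHenig{f}{\s}{\c}=\PEffHenig{\fvb}{\s}{\c\times\R_\geq}$, from the two theorems of \Cref{sec:Fliege_type_results} with $\nu$ constant on $\s$.
\end{itemize}
Passing from $\s$ to $\calD$ for $\fvb$ would need an additional hypothesis controlling the growth of $\nu$ off $\s$ relative to the possible gain in $f$.
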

	\begin{proof}
		\begin{enumerate}
			\item[\ref{th:Bestrafung:VektorielleBestrafung:PEff:BeziehungenPEffs:A1}]
			By \Cref{th:MOP:Eff:PEff:ScapPEffInPEff} we get $\s\cap\PEffHenig{f}{\calD}{\c}\subseteq\PEffHenig{f}{\s}{\c}$. For any $\x\in\boundary[\meinBildraum]{\s}\cap\PEffHenig{\fvb}{\calD}{\c\times\R_\geq}$ we have $\x\in\s$ and since for any other $y\in\s$ by \eqref{A1} it is $\nu(y)\leq\nu(\x)$ with \Cref{th:Bestrafung:vektorielleBestrafung:PEff:PEff+/PEff} and \Cref{th:MOP:Eff:PEff:ScapPEffInPEff} this implies $\x\in\PEffHenig{f}{\s}{\c}$, proving \eqref{gl:Bestrafung:VektorielleBestrafung:PEff:BeziehungenPEffs:Dsub} and thus also \eqref{gl:Bestrafung:VektorielleBestrafung:PEff:BeziehungenPEffs:DIsub}.


			\item[\ref{th:Bestrafung:VektorielleBestrafung:PEff:BeziehungenPEffs:A3}]
			Under \eqref{A3} we with \Cref{th:MOP:Eff:PEff:ScapPEffInPEff} and \Cref{th:Bestrafung:vektorielleBestrafung:PEff:PEff+/PEff} can immediately see
			\[\s\cap\PEffHenig{\fvb}{\calD}{\c\times\R_\geq}\subseteq\PEffHenig{\fvb}{\s}{\c\times\R_\geq}\subseteq\PEffHenig{f}{\s}{\c}.\]
			For the other direction we use \Cref{th:MOP:Eff:PEff:ScapPEffInPEff} and \Cref{th:Bestrafung:vektorielleBestrafung:PEff:PEff/PEff+} to get $\s\cap\PEffHenig{f}{\calD}{\c}\setminus\PEffHenig{\fvb}{\calD}{\c\times\R_\geq}=\emptyset$ implying the claim \eqref{gl:PEffS=ScapPEff+beiA3}. 
			
			If $\s$ is closed, \eqref{A3} implies \eqref{A1} and thus \eqref{gl:Bestrafung:VektorielleBestrafung:PEff:BeziehungenPEffs:DIsub} follows from \ref{th:Bestrafung:VektorielleBestrafung:PEff:BeziehungenPEffs:A1}. Furthermore, assuming \eqref{gl:Bestrafung:VektorielleBestrafung:PEff:BeziehungenPEffs:PEffS/PEffDForm}, we get the other direction of \eqref{gl:Bestrafung:VektorielleBestrafung:PEff:BeziehungenPEffs:DI=} by differentiating between the cases $\x\in\PEffHenig{f}{\s}{\c}\cap\PEffHenig{f}{\calD}{\c}\cap\interior[\meinBildraum]{\s}$ and since \eqref{gl:Bestrafung:VektorielleBestrafung:PEff:BeziehungenPEffs:PEffS/PEffDForm} $\x\in\PEffHenig{f}{\s}{\c}\setminus\PEffHenig{f}{\calD}{\c}\subseteq\boundary[\meinBildraum]{\s}$, which by \eqref{gl:PEffS=ScapPEff+beiA3} gives $\x\in\PEffHenig{\fvb}{\calD}{\c\times\R_\geq}$. \qedhere
		\end{enumerate}
	\end{proof}
    
    As a consequence of Theorem \ref{th:Bestrafung:VektorielleBestrafung:PEff:BeziehungenPEffs}, we obtain the following complete characterization of the set of Henig-properly efficient solutions of the constrained vector optimisation problem under convexity assumptions on $\calD$ and $f$.
		
	\bigskip 
    \begin{Korollar}
 Assume \eqref{gl:VOPRestrVBVor} and $\nu\in \mathzapf{A}_3(\s,\calD)$. Suppose that $\s$ is closed, $\calD$ is convex and $f$ is $C$-convex on $\calD$, then
			\begin{equation*}
				\PEffHenig{f}{\s}{\c}=\left(\interior[\meinBildraum]{\s}\cap\PEffHenig{f}{\calD}{\c}\right)\cup\left(\boundary[\meinBildraum]{\s}\cap\PEffHenig{\fvb}{\calD}{\c\times\R_\geq}\right) , 
			\end{equation*}
            and
			\begin{equation*}
				\PEffHenig{f}{\s}{\c}=\left(\s\cap\PEffHenig{f}{\calD}{\c}\right)\cup\left(\boundary[\meinBildraum]{\s}\cap\PEffHenig{\fvb}{\calD}{\c\times\R_\geq}\right) .
			\end{equation*} 
    \end{Korollar}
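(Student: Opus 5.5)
The corollary follows by combining part~\ref{th:Bestrafung:VektorielleBestrafung:PEff:BeziehungenPEffs:A3} of \Cref{th:Bestrafung:VektorielleBestrafung:PEff:BeziehungenPEffs} with \Cref{t-conv}. Nothing new needs to be constructed; the work is checking that the hypotheses match.

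First, the assumptions of the corollary contain those of \Cref{t-conv}: \eqref{gl:VOPRestrVBVor} includes \eqref{gl:VOPRestrVor}, $\calD$ is convex, and $f$ is $\c$-convex on $\calD$. Hence \eqref{gl:MOP:Eff:PEff:PEff/PEffInBD} holds:
\[\PEffHenig{f}{\s}{\c}\setminus\PEffHenig{f}{\calD}{\c}\subseteq\s\cap\boundary[\meinUrbildraum]{\s}\subseteq\boundary[\meinUrbildraum]{\s}.\]
This is exactly condition \eqref{gl:Bestrafung:VektorielleBestrafung:PEff:BeziehungenPEffs:PEffS/PEffDForm}, which part~\ref{th:Bestrafung:VektorielleBestrafung:PEff:BeziehungenPEffs:A3} requires in addition to $\nu\in\mathzapf{A}_3(\s,\calD)$ and closedness of $\s$. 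Both of those are assumed in the corollary.

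Next, I apply part~\ref{th:Bestrafung:VektorielleBestrafung:PEff:BeziehungenPEffs:A3}. It yields directly the equalities \eqref{gl:Bestrafung:VektorielleBestrafung:PEff:BeziehungenPEffs:DI=} and \eqref{gl:Bestrafung:VektorielleBestrafung:PEff:BeziehungenPEffs:D=}, which are the two displayed identities of the corollary.

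The only point needing care is the consistency of the space in which $\interior{\cdot}$ and $\boundary{\cdot}$ of $\s$ are taken. Some places in the theorem write $\meinBildraum$ as a subscript, but $\s\subseteq\meinUrbildraum$, so these must be read as taken in $\meinUrbildraum$, as in \Cref{t-conv}. I would state this explicitly, so that the inclusion delivered by \Cref{t-conv} is literally the hypothesis \eqref{gl:Bestrafung:VektorielleBestrafung:PEff:BeziehungenPEffs:PEffS/PEffDForm}.

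Should a self-contained check of the second identity be wanted, it also follows from the first. By \Cref{th:MOP:Eff:PEff:ScapPEffInPEff}, $\s\cap\PEffHenig{f}{\calD}{\c}\subseteq\PEffHenig{f}{\s}{\c}$. Moreover, $\s\cap\PEffHenig{f}{\calD}{\c}$ splits into its parts in $\interior{\s}$ and in $\s\cap\boundary{\s}$. The part on the boundary lies in $\s\cap\PEffHenig{\fvb}{\calD}{\c\times\R_\geq}$ by \eqref{gl:PEffS=ScapPEff+beiA3}. Hence the two right-hand sides coincide.
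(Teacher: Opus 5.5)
You follow exactly the route the paper intends: \Cref{t-conv} supplies \eqref{gl:Bestrafung:VektorielleBestrafung:PEff:BeziehungenPEffs:PEffS/PEffDForm}, and then part~\ref{th:Bestrafung:VektorielleBestrafung:PEff:BeziehungenPEffs:A3} of \Cref{th:Bestrafung:VektorielleBestrafung:PEff:BeziehungenPEffs} is quoted. Your hypothesis matching, the $\meinBildraum$/$\meinUrbildraum$ remark and the derivation of the second identity from the first are fine as bookkeeping. The genuine gap is that the quoted result is false. The inclusion $\PEffHenig{f}{\s}{\c}\subseteq\s\cap\PEffHenig{\fvb}{\calD}{\c\times\R_\geq}$ in \eqref{gl:PEffS=ScapPEff+beiA3} fails, and with it the corollary. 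Both ``$\subseteq$'' halves of the corollary rest on this inclusion, and so does your self-contained check.

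A counterexample: take $X=Y=\calD=\R$, $\c=\R_\geq$, $\s=(-\infty,0]$, $f(x)=-x$ (linear, hence $\c$-convex) and $\nu(x)=(\max\{x,0\})^2$. This $\nu$ satisfies \eqref{A3} with $\calU=\calD$.
\begin{itemize}
\item $\PEffHenig{f}{\s}{\c}=\{0\}$ (use $H=\c$), while $\PEffHenig{f}{\calD}{\c}=\emptyset$.
\item Every $H^\vb\in\HenigDilatingCones{\c\times\R_\geq}$ has $(1,0)\in\interior{H^\vb}$, hence $(1,-\varepsilon)\in H^\vb$ for some $\varepsilon>0$.
\item For $y=\varepsilon$ this gives $\fvb(y)-\fvb(0)=(-\varepsilon,\varepsilon^2)=-\varepsilon(1,-\varepsilon)\in(-H^\vb)\setminus\{(0,0)\}$, so $0\notin\PEffHenig{\fvb}{\calD}{\c\times\R_\geq}$.
\end{itemize}
Hence both right-hand sides of the corollary are empty, while the left-hand side is $\{0\}$.

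The missing idea is the passage from $\s$ to $\calD$ for $\fvb$. Since $\nu$ is constant on $\s$, \Cref{th:Bestrafung:vektorielleBestrafung:PEff:PEff/PEff+} only yields $\PEffHenig{f}{\s}{\c}\subseteq\PEffHenig{\fvb}{\s}{\c\times\R_\geq}$ and $\s\cap\PEffHenig{f}{\calD}{\c}\subseteq\PEffHenig{\fvb}{\calD}{\c\times\R_\geq}$; the latter is all the paper's argument for \eqref{gl:PEffS=ScapPEff+beiA3} actually establishes. Enlarging the feasible set from $\s$ to $\calD$ is the reverse of \Cref{th:MOP:Eff:PEff:ScapPEffInPEff}. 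For plain efficiency it works because $\nu(y)>\nu(\bar x)$ for $y\in\calD\setminus\s$. For proper efficiency it fails: every Henig dilating cone of $\c\times\R_\geq$ contains directions $(c,-\varepsilon)$ with $c\in\c\setminus\{0\}$ and $\varepsilon>0$. So an infeasible point whose objective improves at first order, while its penalty grows only at higher order, destroys proper efficiency.

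What survives is only the inclusion ``$\supseteq$''. Closedness of $\s$ together with \eqref{A3} gives \eqref{A1}, so part~\ref{th:Bestrafung:VektorielleBestrafung:PEff:BeziehungenPEffs:A1} applies. For ``$\subseteq$'' you need an additional exact-penalty hypothesis. For example: for each $\bar x\in\PEffHenig{f}{\s}{\c}\setminus\PEffHenig{f}{\calD}{\c}$ there exist $H\in\HenigDilatingCones{\c}$ with $\bar x\in\Eff{f}{\s}{H}$ and $h\in\interior{H}$ such that $f(\bar x)-f(y)\notin H+(\nu(y)-\nu(\bar x))h$ for all $y\in\calD\setminus\s$. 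Given \eqref{A3}, this says precisely that $\bar x\in\Eff{\fvb}{\calD}{H^\vb_h}$ for the cone $H^\vb_h\in\HenigDilatingCones{\c\times\R_\geq}$ of \Cref{def:HenigDilatingConesVB}.
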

    
    \begin{Bemerkung}
        Assuming \eqref{gl:VOPRestrVBVor} with $\calD=\meinUrbildraum$, \Cref{th:Bestrafung:VektorielleBestrafung:PEff:BeziehungenPEffs}~\ref{th:Bestrafung:VektorielleBestrafung:PEff:BeziehungenPEffs:A1} shows that the constrained problem \eqref{gl:P} can be analysed by splitting the set of Henig-properly efficient points into an interior part and a boundary part. More precisely, Henig-properly efficient points in $\interior[\meinBildraum]{\s}$ are described by the unconstrained problem \eqref{gl:PX}, whereas Henig-properly efficient points in $\boundary[\meinBildraum]{\s}$ are captured by the penalised unconstrained problem \eqref{gl:PvbX}. Hence, the difficulty caused by the constraint set $\s$ is localised at the boundary of $\s$, while the interior behaves as in the unconstrained case. In this sense, \eqref{A1} provides the minimal compatibility needed to preserve the relevant ordering information and to obtain the interior--boundary decomposition at the level of inclusions. This decomposition is conceptually useful for deriving optimality conditions with a simpler structure and is also promising for the design of effective algorithms for constrained vector optimisation problems. Part~\ref{th:Bestrafung:VektorielleBestrafung:PEff:BeziehungenPEffs:A3} is stronger in that the penalised problem captures the constrained one exactly on $\s$, and under the additional boundary condition from \Cref{t-conv} this leads to a full decomposition formula. 
    \end{Bemerkung}

\section{Conclusions}\label{s-con}

In this paper, we derived a complete characterisation of the set of Henig-properly efficient solutions of a constrained vector optimisation problem in terms of the sets of Henig-properly efficient solutions of two associated unconstrained vector optimisation problems by means of a vectorial penalisation approach.

The main conceptual gain of this representation is that it separates the influence of the constraint set into an interior regime and a boundary regime. Interior feasible points can be handled through the original unconstrained problem, while boundary points are described by the penalised unconstrained problem. In this way, the constrained problem is reduced to two unconstrained ones with a transparent interpretation of the role played by the boundary of the feasible set.

This decomposition provides a useful basis for further theoretical and computational developments. In particular, it is natural to use the results established here to derive necessary and sufficient optimality conditions for Henig-properly efficient solutions of constrained vector optimisation problems. Moreover, the obtained characterisation suggests algorithmic approaches that combine methods for unconstrained vector optimisation with a dedicated treatment of boundary points of the feasible set.



	\printbibliography

\end{document}